\newtheorem{thm}{Theorem}[section]
\newcommand{\be}{\begin{eqnarray}}
\newcommand{\ee}{\end{eqnarray}}
\newcommand{\bd}{\begin{displaymath}}
\newcommand{\ed}{\end{displaymath}}
\newcommand {\bad}{\begin{aligned}}
\newcommand {\ead}{\end{aligned}}
\newtheorem{n.b}[thm]{Note}
\newtheorem{lem}[thm]{Lemma}
\newtheorem{exa}[thm]{Example}
\newcommand{\bal}{\mbox{\boldmath $\alpha$}}
\newcommand{\bbet}{\mbox{\boldmath $\beta$}}
\newcommand{\bphi}{\mbox{\boldmath $\phi$}}
\newcommand{\bPhi}{\mbox{\boldmath $\Phi$}}
\newcommand{\bPsi}{\mbox{\boldmath $\Psi$}}
\newcommand {\bdm}{\begin{displaymath}}
\newcommand {\edm}{\end{displaymath}}
\newcommand {\ben}{\begin{equation}}
\newcommand {\een}{\end{equation}}
\newtheorem{example}{Example}[section]
\newtheorem{theorem}{Theorem}[section]
\newtheorem{remark}{Remark}[section]
\newtheorem{lemma}{Lemma}[section]
\newfont{\smoldita}{cmmib8}
\newfont{\boldita}{cmmib10}
\newfont{\bboldita}{cmmib10 scaled\magstep1}
\newcommand{\sem}[1]{\mbox{$\{e^{t#1}\}_{t \geq 0}$}}
\newcommand{\mbb}[1]{\mathbb{#1}}
\newcommand{\mb}[1]{\mathbf{#1}}
\newcommand{\init}[1]{\stackrel{\circ}{#1}}
\newcommand{\nn}{\nonumber}
\newcommand{\e}{\epsilon}
\newcommand{\p}{\partial}
\newcommand{\cl}[2]{\int\limits_{#1}^{#2}}
\newcommand{\bom}{\mbox{\boldmath $\omega$}}
\newcommand{\ti}[1]{\tilde{#1}}
\newcommand{\la}{\lambda}
\newcommand{\mc}[1]{\mathcal{#1}}
\begin{document}

\markboth{J. Banasiak, A. Falkiewicz \& P. Namayanja}{Asymptotic state lumping in  transport and diffusion problems on networks}

\title{Asymptotic state lumping in transport and diffusion problems on networks}

\author{J. BANASIAK}

\address{School of Mathematics, Statistics and Computer Science, University of
KwaZulu-Natal, Durban, South Africa
 \\
 Institute of Mathematics,
Technical University of \L\'{o}d\'{z}, \L\'{o}d\'{z}, Poland\\
banasiak@ukzn.ac.za}
\thanks{Research of J.B. and A.F was done during NRF/IIASA SA YSSP at the University of Free State and was partly supported by  National Science Centre of Poland through the grant N N201605640. Research of P.N. was supported by TWOWS and the UKZN Research Fund.}
\author{A. FALKIEWICZ}

\address{Institute of Mathematics,
\L\'{o}d\'{z} University of Technology, \L\'{o}d\'{z}, Poland\\
aleksandrafalkiewicz@gmail.com}

\author{P. NAMAYANJA}
\address{School of Mathematics, Statistics and Computer Science, University of
KwaZulu-Natal, Durban, South Africa
 \\
proscovia@aims.ac.za}

\keywords{Asymptotic analysis; diffusion on graphs; transport on graphs; semigroups of operators; population dynamics; aggregation of variables}

\subjclass{AMS Subject Classification: 92C42, 34E10, 34E13, 35F46, 35K51, 47D06, 92D25}

\maketitle

\begin{abstract}
One of the aims of systems biology is to build  multiple layered and multiple scale models of living systems which can efficiently describe phenomena occurring at various level of resolution. Such models should consist of layers of various microsystems interconnected by a network of pathways, to  form a macrosystem in a consistent way; that is, the observable characteristics of the macrosystem should be, at least asymptotically,  derivable by aggregation of the appropriate features of the microsystems forming it, and from the properties of the network. In this paper we consider a general macromodel describing a population consisting of several interacting with each other subgroups, with the rules of interactions  given by a system of ordinary differential equations, and we construct two different micromodels whose aggregated dynamics is approximately the same  as that of the original macromodel. The micromodels offer a more detailed description of the original macromodel's dynamics by considering an internal structure of each subgroup. Here, each subgroup is represented by an edge of a graph with diffusion or transport occurring along it, while the interactions between the edges are described by  interface conditions at the nodes joining them. We prove that with an appropriate scaling of such models, roughly speaking, with fast diffusion or transport combined with slow exchange at the nodes, the solutions of the micromodels are close to the solution to the macromodel.
\end{abstract}

\section{Introduction}	
\label{sec1}
In the recent paper \cite{bel}, the authors have proposed an interesting and exciting paradigm of developing theoretical biology through a proper mathematization of systems biology. It involves an interplay of many disciplines belonging to mathematics, biology and to their overlap, such as the theory of evolution, immune competition, mutation and selection, kinetic theory, evolutionary game theory, multi-scale methods and networks.
In the words of D. Noble, Ref.~ \cite{No}, pp. 112 and 129,
\begin{quote}
We are looking towards a mature theory of biological systems-level interactions [...]. The task of systems biology is first to unravel these interactions and then to develop theories to account for them, and so to lay bare their logical underpinnings.

One of the important goals of integrative systems biology is to identify the levels at which the various
functions exist and operate.
\end{quote}
In other words, the aim of systems biology is to build  universal, hierarchical models of  biological phenomena which would include
all levels of organization of living matter.  For instance, for malaria, one would like to have a model including the dynamics of the plasmodium, through the cell, tissue, individual and ending at the population, or even metapopulation, levels.

\subsection{Multi-scale models in systems biology.} Systems biology has reached out to many areas of mathematics, even including disciplines such as category theory \cite{Lo}, or logic and set theory \cite{Ra} which, for many years, have been regarded as belonging to pure mathematics. However, modelling solely based on these disciplines  results in a static picture of the system,  which is not satisfactory for many applications. To provide the required dynamical features we enhance the model by incorporating differential equations into the description. These equations are intended to model the  evolution of particular building blocks of the system  and typically they are interlinked by a complex network spanning several time and size scales. Usually the existence of different scales, or levels, in the model is revealed by the presence of small (or large) scaling parameters representing the ratios of the times typical for processes occurring in particular building blocks, or the ratios of the sizes of objects involved in them. In full generality, such models are too complex to allow for any robust analysis and thus it is of interest to be able to focus on the scale relevant to a particular aspect of the system's behaviour by selecting an appropriate simpler sub-model. However, in many applied sciences often there is an expectation that complex multi-scale systems can be described by plug-and-play type models. In such models, focusing on a required level amounts just to switching off the unwanted scales by setting relevant scaling parameters to 0. Unfortunately, in most cases it is impossible. Even in the text quoted above we see that there is a clear understanding that all levels of organization are interconnected. In other words, it is recognized that there is an interdependence between various scales in the model so that, whenever we focus on a sub-model acting at some level, there should be a `shadow' of all scales that were switched off.  The problem is exacerbated by the fact that dynamics at  different levels usually is described by equations which are not fully compatible with each other, especially close to the boundaries between the levels. In other words, setting a particular scaling parameter equal to zero may dramatically change the type of the equation and render the problem ill-posed. Thus, moving between the scales, if possible at all, cannot be accomplished by simply setting the appropriate scaling parameter to 0 but  requires complicated limit procedures which lead to the so-called singular limits of the involved equations.

\subsection{Multi-scale models on networks.} The paper Ref.~ \cite{bel} offers a survey of a wide range of mathematical methods which allow for dealing with such complex and interrelated models. In the presented paper we shall focus on particular aspects of  the proposed approach, namely on multi-scale dynamical systems related by a network and their singular limits. Such systems  provide a detailed description of a model at what we refer to as the micro-scale, while the singular limits give an aggregated description of the model at the macro-scale. This procedure is often referred to as the asymptotic state lumping. In this paper we focus on two types of dynamical systems, transport and diffusion and, to keep technicalities to the minimum, we restrict ourselves to linear problems.

To set the stage, we begin with a brief description of how complex, more detailed, systems are derived from more crude building blocks.   When we model a complex, interlinked system of subpopulations, we often begin by considering a static graph (or often a directed graph), where the edges represent connections between the subpopulations and the intensity of interactions are given by the weights associated with the edges, see e.g. Ref.~ \cite{deo}.
\begin{figure}
\centering
\begin{tikzpicture}[scale=0.5, ->,>=stealth',shorten >=1pt,auto,node distance=1.5cm,
                    semithick]
  \tikzstyle{every state}=[fill=none,draw=none,text=black]

  \node [state] (A)                  				  {$V_{1}$};
  \node[state]         (B) [right of=A]       {$V_{2}$};
  \node[state]         (C) [right of=B]       {$V_{3}$};
  \node[state]         (D) [below of=C]       {$V_{4}$};
	\node[state]         (E) [right of=C]       {$V_{5}$};
	\path     (A) edge      [loop above]        node [above] {$E_{1}$} (C);
\path     (B) edge      [loop above]        node [above] {$E_{3}$} (B);
\path     (C) edge      [loop above]        node [above] {$E_{5}$} (B);
\path     (D) edge      [loop left]        node [below] {$E_{9}$} (B);
\path     (E) edge      [loop above]        node [above] {$E_{8}$} (B);
  \path               (A) edge              node {$E_{2}$} (B);
  \path               (B) edge              node {$E_{4}$} (C);
	\path               (D) edge              node {$E_{10}$} (C);
  \path               (C) edge [bend right] node [below]{$E_{6}$} (E);
  \path               (E) edge [bend right]  node[above]{$E_{7}$} (C);
\end{tikzpicture}
\caption{An example of a directed graph.}\label{fig1}
\end{figure}
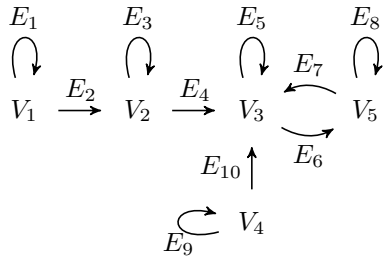
As we mentioned above, static models usually fall short of what is expected from them. Therefore the next step is to assume that a subpopulation, localized at a particular node,  changes due to interactions with subpopulations at the nodes connected with that node. It results in a system of ordinary differential equations, where, in the linear case, the weights are the rates at which a particular node influences the changes at the connected nodes.
\subsubsection{First examples}
\begin{example}
\textbf{A basic mutation model.}\label{exRot}
   Consider the population described by $\mb v =(v_1,\ldots, v_m),$ where $v_j, 1\leq j\leq m,$ is the number of cells whose genotype belongs to class $j$ (for instance, having $j$ copies of a specific gene). Then  its evolution can be described by the system
\begin{equation}
\p_t\mb v = \mbb K\mb v,
\label{ku}
\end{equation}
where $\mbb K$ is the matrix describing connections between the nodes. For the dynamics on the network on Fig. \ref{fig1}, $\mbb K$ can be given by    \begin{equation}
\mbb K = \begin{pmatrix} 1+ k_{11} &  0& 0&0&0 \\
k_{21} & 1+k_{22}  & 0&0&0 \\
0 & k_{32} & 1+ k_{33}&k_{34}& k_{35}\\
0 & 0 &0 &1+k_{44}&0 \\
0 & 0 & k_{53}&0& 1+k_{55}
\end{pmatrix},
\label{Rmodel}
\end{equation}
and describe a situation in which the cell in class $j$ divides into two daughter cells, one of which has the same genotype as the mother, while the other changes its genotype to that of class $i$ with probability $k_{ij}$. In such a case $(k_{ij})_{1\leq i,j\leq 5}$ is a column stochastic matrix. We note that a particular case of this model is the discrete Rotenberg-Rubinov-Lebowitz model \cite{rot} where the cells are divided in classes according to their maturation velocity.
\end{example}

\begin{example}\label{AG}\textbf{ Aristizabal and Glavinovi\v{c} model of synaptic depression.} In  \cite{AG} the authors introduced a heuristic model of synaptic depression. In this model, neurotransmitters are localized in three compartments, or pools: the large
pool, where also their synthesis takes place, the small, intermediate, pool, and
the immediately available pool, from which they are released during stimulus. The key assumption in \cite{AG} was that the dynamics of the densities $\mb U = (U_1,U_2,U_3)$ of vesicles
with neurotransmitters in the pools is analogous to that of voltages across the capacitors in the electric circuit.
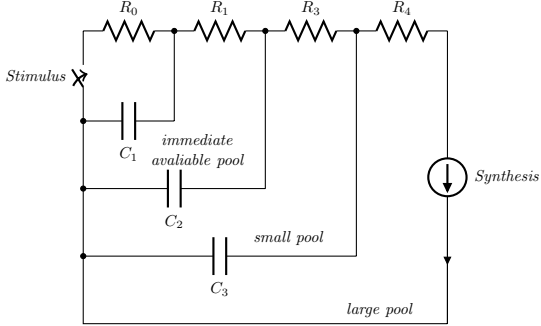
\begin{figure}
\begin{center}
\begin{circuitikz} [scale =0.6, transform shape] \draw
(0,2) to [R=$R_{0}$,-*] (2,2) -- (2,0)
			to [C=$C_{1}$,-*] (0,0)
			to [closing switch=Stimulus] (0,2)
(2,2) to [R=$R_{1}$,-*] (4,2) -- (4,-1.5)
			to [C=$C_{2}$,-*] (0,-1.5) --(0,0)
(2.5,-.4) node {\text{immediate}}			
(2.5,-.9) node {\text{avaliable pool}}
(4,2) to [R=$R_{3}$,-*] (6,2) -- (6,-3)
			to [C=$C_{3}$,-*] (0,-3) -- (0,-1.5)
(4.5,-2.6) node {\text{small pool}}
(6,2) to [R=$R_{4}$] (8,2)
			to [american current source=Synthesis] (8,-4.5) -- (0,-4.5) -- (0,-3)
(6.5,-4.2) node {\text{large pool}}
;
\end{circuitikz}
\end{center}
\caption{
A circuit realization of the Aristazabal--Glavinovi\v{c} model.}
\end{figure}
For the electric circuit, $E$ is the electromotive source,  $C_i$s are capacities, while $R_i$s are the resistances. Biologically, $E$ represents synthesis,  $C_i$s are the capacities to store vesicles and  $\frac 1{R_{i}C_{j}}$ are interpreted as the pools' replenishment rates.
This results in the following system of ODEs for $\mb U,$
\begin{equation} \label{ag}
\p_t\mb U
=  \mbb K \mb U  + \mb F,
\end{equation}
where
$$ \mbb K = \begin{pmatrix} - \frac 1{R_{1}C_{1}} &  \frac 1{R_{1}C_{2}} & 0 \\
\frac 1{R_{1}C_{1}} & -\frac 1{R_{2}C_{2}}- \frac 1{R_{1}C_{2}}  & \frac 1{R_{2}C_{3}} \\
0 & \frac 1{R_{2}C_{2}} & - \frac 1{R_{2}C_{3}} 
\end{pmatrix}\qquad \mb F = \begin{pmatrix} -\frac 1{R_{0}C_{1}} \\ 0 \\ \frac 1{R_{3}C_{3}}(E- U_3) \end{pmatrix}. $$
\end{example}
\subsubsection{Macro and micro models}
It is worthwhile to reflect on the models discussed in Examples \ref{exRot} and \ref{AG}. Both represent systems that operate and are observable at the macro-scale. This is the scale of our everyday experience as, in principle, we can measure the total number of cells with a particular genetic characteristics or the number of vesicles in a particular state. Such systems, which have characteristics observable at the macro-scale, are called macro-systems. The models introduced above only give heuristic relations between  macro-features, or observables, of such systems, ignoring any underpinning dynamics influencing these relations. Models of this kind, which here we refer to as \emph{macro-models}, are relatively easy to use but often are  too crude. Following the ideas presented in Ref.~ \cite{bel}, we observe that  usually the nodes themselves often have internal structure with a complex dynamics, and the state of the system in these nodes is the  resultant of this dynamics. Moreover, the nodes are interlinked and the rules of connection, given above by the coefficients of the weighted adjacency matrix $\mbb K$, may themselves be determined by dynamical processes taking place along  the edges. These scenarios can occur in various configurations and the models enhanced by considering dynamics at the nodes and/or along the connecting pathways, here called \emph{micro-models}, typically provide a better insight into the dynamics of complex  processes. At the same time, in  such micro-models the added dynamical processes often act at  different time, or size, scales and thus the micro-model actually becomes a  multi-scale model. We observe that the micro-model must be asymptotically consistent with the macro-model; that is, the macroscopic features of the micro-model should be approximately the same as the features provided by solving the macro model. In other words, the observables determined by the solutions of the macro-model must be recoverable from the multi-scale micro-model by taking its  regular or singular limit, and the coefficients of the macro-model should be fully determined by the processes occurring at the micro scale. We note that such a point of view, though in a restricted setting, has been already mathematically explored in \cite{AB12,BB15}.

 It is, however, important to observe that, in general, building a micro-model on the basis a given macro-model may lead to a micro-model whose
 singular limit is completely different from the original macro-model. An example of such a micro-model is offered by the McKendrick model with geographical structure and fast migrations between the patches, described by a matrix $\mbb K$. It follows \cite{BaLabook,BSG,BSG2,BG} that the aggregated dynamics in such a model is given by a scalar McKendrick equation with averaged birth and death coefficients, as explained in details in Example \ref{ex13}, but not by (\ref{ku}).
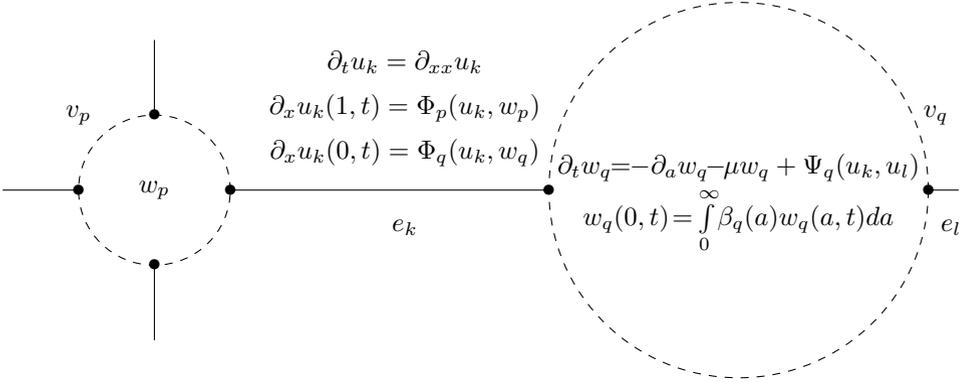
\begin{figure}
\begin{center}
\begin{tikzpicture}
\draw (-4,0) to [,*-*] (.2,0);
\draw	(-5,2) to [,*-*] (-5,1);
\draw	(-7,0) to [,*-*] (-6,0);
\draw	(-5,-1) to [,*-*] (-5,-2);
\draw (5.2,0) to [,*-*] (5.6,0);
\draw[dashed] (-5,0) circle (1cm);
\draw[dashed] (2.7,0) circle (2.5cm);
\draw (-6,0) node {$\bullet$};
\draw (-5,-1) node {$\bullet$};
\draw (-4,0) node {$\bullet$};
\draw (-5,1) node {$\bullet$};
\draw (5.2,0) node {$\bullet$};
\draw (0.2,0) node {$\bullet$};
\draw (2.7,0.3) node {$\p_t w_q\!\!=\!\!-\p_a w_q\!\!-\!\!\mu w_q+\Psi_q(u_{k},u_{l})$};
\draw(2.7,-0.4) node{$w_q(0,t)\!=\!\int\limits_{0}^\infty\! \beta_q(a)w_q(a,t)da$};
\draw (-5,0) node {$w_p$};
\draw (-6,1) node {$v_{p}$};
\draw (5.3,1) node {$v_{q}$};
\draw (-1.7,-.5) node {$e_{k}$};
\draw (5.5,-.5) node {$e_{l}$};
\draw (-1.7,1.7) node {$\partial_{t}u_{k}=\partial_{xx}u_{k}$};
\draw (-1.7,1.1) node {$\p_x u_{k}(1,t)=\Phi_p(u_k,w_p)$};
\draw (-1.7,0.5) node {$\p_x u_{k}(0,t)=\Phi_q(u_k,w_q)$};
\end{tikzpicture}\end{center}
\caption{Hypothetical micro-model related to a network in which the population in the node $v_q$  evolves according to the McKendrick model influenced by populations migrating through the edges $e_k$ and $e_l$, modelled by a functional $\Psi_q^{k,l}$. On the other hand, the exchange between the nodes occurs through diffusion along the edges with fluxes at the endpoints given by some functionals of the population densities $w_p$ and $w_q$ in the nodes and the density $u_k$ on the edge.}
\label{fig3}
\end{figure}

\section{Main results}
\label{sec2}
Let us consider model  (\ref{ku}), describing the evolution of subpopulations concentrated in $m$ locations, with no internal dynamics, which  influence each other according to the pattern of connections described by the entries of a matrix $\mbb K.$ Our aim is to build a micro-model by including internal dynamics in the subpopulations, which would have asymptotically the same macroscopic characteristics as (\ref{ku}). In this paper we accomplish this by allowing for the subpopulations to evolve along the edges of some (hyper) graph according to either diffusion or transport operators and subject to specific interface conditions at the endpoints of the edges.

Before we formulate the main results, we have to introduce some basic notation. We consider problems in $\mbb R^m$ and the boldface characters will usually denote vectors in $\mbb R^m$, e.g. $\mb u = (u_1,\ldots,u_m).$ We denote  $\mc M = \{1,\ldots,m\}.$ Further, for any Banach space $X$, we will use the notation $\mb X = \underbrace{X\times\ldots\times X}_{m\,times}$, e.g. for $X=L_1(I), I=[0,1]$ we denote $\mb L_1(I)=\underbrace{L_1(I)\times\ldots\times L_1(I)}_{m\,times}$.

The main contribution  of the paper  consists of the following two results.

\paragraph{Diffusion along the edges.}
Let us consider the following initial-boundary problem
\begin{eqnarray}
 \p_t \mb u_\e(x,t) &=& \frac{1}{\e}\p_{xx}\mb u_\e(x,t), \qquad (x,t) \in ]0,1[\times \mbb R_+,\nn\\
 \p_x \mb u_\e(0,t)&=& \e\mbb K^{00}\mb u_\e(0,t) + \e\mbb K^{01}\mb u_\e(1,t),\qquad t>0, \nn\\
 \p_x \mb u_\e(1,t)&=&  \e\mbb K^{10} \mb u_\e(0,t)+\e\mbb K^{11} \mb u_\e(1,t), \qquad t>0,\nn\\
 \mb u_\e(x,0) &=& \mathring{\mb u}(x), \qquad x\in ]0,1[,
 \label{system1'}
\end{eqnarray}
where  $\e>0$ and $\mbb K^{\omega}$, $\omega \in \Omega = \{00, 01,10,11\}$ are real $m\times m$ matrices. Further, let
\ben\label{projection0}
\mb v_\e  = \left(\int_{0}^{1}u_{1,\e}(x)dx, \ldots \int_{0}^{1}u_{m,\e}(x)dx\right)
\een
and let $\bar{\mb v}$ be the solution to (\ref{ku}) with
$$
\mbb K = \mbb K^{10}-\mbb K^{00} +\mbb K^{11}-\mbb K^{01}
$$
and the initial condition given by $\left(\int_{0}^{1}\mathring{u}_{1}(x)dx, \ldots \int_{0}^{1}\mathring{u}_{m}(x)dx\right)$.
Then, for any $\mathring{\mb u}\in  \mb W^2_1(I)$ and $T>0,$ there is $C_T,$ independent of $\e$ and $\mathring{\mb u},$ such that
\begin{equation}
\|\mb v_\e(t) - \bar {\mb v}(t) \|_{\mb X} \leq \e C_T\|\mathring{\mb u}\|_{\mb W^2_1(I)},
\label{finest0}
\end{equation}
uniformly on $[0,T]$, where $\mb X=\mb L_1(I)$ or $\mb X = \mb C(I)$.
\paragraph{Transport along the edges.}

Consider the transport equation
\begin{eqnarray}
\partial_{t}\mathbf{u}_\e(x, t) &=& -\frac{1}{\epsilon}\partial_{x}\mathbf{u}_\e(x, t),\quad x\in ]0, 1[\times \mbb R_+,\nn\\
\mathbf{u}_\e(0, t) &=& \mathbf{u}_\e(1, t) + \epsilon \mathbb{B}\mathbf{u}_\e(1, t),\quad t>0,\nn\\
\mathbf{u}_\e(x, 0)  &=& \mathring{\mathbf{u}}(x),\quad x\in ]0, 1[,
\label{Transport''}
\end{eqnarray}
 where $\e>0$ and $\mbb B$ is an arbitrary matrix. If $\mb v_\e$ is defined by (\ref{projection0}) and $\bar{\mb v}$ is the solution to (\ref{ku}) with $
\mbb K = \mbb B
$
and the initial condition given by $\left(\int_{0}^{1}\mathring{u}_{1}(x)dx, \ldots \int_{0}^{1}\mathring{u}_{m}(x)dx\right)$,
then, for any $\mathring{\mb u}\in  \mb W^1_1(I)$ and $T>0,$ there is $C_T,$ independent of $\e$ and $\mathring{\mb u},$ such that
\begin{equation}
\|\mb v_\e(t) - \bar {\mb v}(t) \|_{\mb L_1(I)} \leq \e C_T\|\mathring{\mb u}\|_{\mb W^1_1(I)},
\label{finest0'}
\end{equation}
uniformly on $[0,T].$

    The results presented here heavily depend on the well-posedness theory of problems of the form (\ref{system1'}) and (\ref{Transport''}) and precise estimates obtained for them. This theory has been developed in Ref.~ \cite{BFN2}. Here we only recall these results when needed.

\begin{remark}
The above results show that the models (\ref{system1'}) and (\ref{Transport''}) both are correct micro-extensions of the macro-model (\ref{ku}) in the sense explained in the introduction; that is, their macroscopic features (here the total size of each subpopulation) evolve approximately in the same way as described by (\ref{ku}).
\end{remark}
\begin{remark}
We note that (\ref{finest0}) and (\ref{finest0'}) fall short of  typical results expected in asymptotic analysis, where one constructs an approximation of the whole solution $\mb u_\e$ by using appropriate initial, and possibly other, layers. This also allows for proving  that the singular limit solution $\bar{\mb v}$ provides a good approximation to $\mb u_\e$ outside a narrow transient layer close to $t=0$, see Ref.~ \cite{BaLabook}. We are able to prove such a result for the diffusion problem. However, for the transport problem the standard initial layer is a fast-oscillating function of zero mean and thus here we cannot claim that the solution to (\ref{ku}) approximates $\mb u_\e$ away from zero.
\end{remark}
\begin{remark}
The  systems of the form (\ref{system1'}) and (\ref{Transport''}) have originated from the modelling of transport and diffusion processes occurring along the edges of a physical graph, where the exchange between subpopulations only could occur at its vertices, see e.g. Refs~ \cite{BaP,AB12,Bres,NagPhysD,Ka,Ko,Kos,Ku1,Ku2,DM}.

To explain the relation between the latter models with (\ref{system1'}) and (\ref{Transport''}), let us recall that by a graph we understand the pair  $ \mc G = (V,  E)$, where, say, $ V$ is the set of $n$ vertices and $ E$ is the set of $m$ edges.  Note that an edge here is an (unordered) pair of vertices.  In transport problems, since transport, in contrast to diffusion, has a direction, it is more convenient to impose an orientation on each edge and work with a directed graph. A directed graph, or digraph, is the pair $G =(V(G), E(G)) = (\{v_1,\ldots, v_n\}, \{e_1,\ldots,e_m\})$, where, in contrast to $\mc G$, the edges here are defined as ordered pairs of vertices.
An important role is played by the line digraph $L(G)$ of $G.$ To recall
$L(G) = (V(L(G)), E(L(G))) = (E(G), E(L(G))),$ where
$$
E(L(G)) =\{uv;\; u,v \in E(G), \text{the\;head\;of\;}u\;\text{coincides\;with\;the\;tail\;of\;}v\}.
$$
An analogous definition can be made for the line graph $L(\mc G)$ of the graph $\mc G$. To avoid considerations related to the geometry of the edges, see e.g. Ref.~ \cite[Section 2.2.1]{DM}, from the beginning we assume that each edge is identified with $[0,1].$

Then, under some technical conditions \cite{BaP,KS04}, the transport of a substance along the edges of $G$ with Kirchhoff's type interface conditions at the vertices can be cast in the form of (\ref{Transport''}) with some matrix $\mbb A$ describing the boundary condition (in (\ref{Transport''}), $\mbb A = \mbb I +\e\mbb B$). However, in such a case,  $\mbb A$ must be the weighted (transposed) adjacency matrix of $L(G)$, see Ref.~ \cite{BF1}.

Similarly, the diffusion along the edges of some graph $\mc G$ with Robin type interface conditions \cite{AB12} can be written in the form (\ref{system1'}) but then the matrices $\mbb K^\omega$, $\omega =\{00,01,10,11\},$ must satisfy certain conditions which allow for the construction of the (weighted) adjacency matrix of the line graph of $\mc G,$ see Ref.~ \cite{BF1}. A more detailed presentation of this case is given in Example \ref{ex33}.

It is easy to see that not every matrix $\mbb A$ is a weighted adjacency matrix of a line graph. At the same time, there are legitimate models of the form (\ref{Transport''}) with arbitrary (nonnegative) matrix in the boundary conditions. An example is rendered e.g. by the discrete Rotenberg-Rubinov-Lebowitz model \cite{rot}, where the subpopulations can communicate without any physical connection between them.

We emphasize that the results presented in this paper are valid for arbitrary matrices in the boundary conditions of (\ref{system1'}) and (\ref{Transport''}). However,  in many cases it is important to determine whether there is a physical graph structure behind the model. This question was addressed in Ref.~ \cite{BF1}.

We note that another approach would be to consider hypergraphs as any graph is the line graph of a hypergraph \cite{Be}, but we will not pursue this line of research here.
\end{remark}

 \section{Diffusion problems}

We shall consider (\ref{system1'})  in both $\mb C(I)$ and $\mb L_1(I)$. For technical reasons, we also shall need the space $\mb W^1_1(I)$.  The norms in these spaces will be denoted, respectively, by $\|\cdot\|_\infty, \|\cdot\|_0, \|\cdot\|_1$ but, if it does not lead to any misunderstanding, we  use $\mb X$ to denote any of these spaces and $\|\cdot\|$ or $\|\cdot\|_\mb X$ to denote the norm in $\mb X$. Further, by $|||\cdot|||$ with appropriate subscript we denote the operator norm in respective $\mb X$.

To keep the notation in line with Ref.~ \cite{BFN2} (and with Ref.~ \cite{Gre}, on which the well-posedness results are based), we introduce operator $L$ by the formula
\begin{equation}
\mb X \ni \mb u \to L\mb u = (\gamma_0 \p_x\mb u, \gamma_1 \p_x \mb u) \in \mb Y = \mbb R^m\times \mbb R^m,
\label{Phi}
\end{equation}
where $\gamma_i, i=0,1,$ is the trace operator at $x=i$ (taking the value at $x=i$ if $\mbb X = \mb C(I)$). The domains of $L$ are $D(L) = \mb C^1(I)$ if $\mb X =\mb C(I)$ and $D(L) = \mb W^2_1(I)$ in the other cases. Then we define the operator
$$
\mb X\ni \mb u \to \Phi\mb u = \mbb K(\gamma_0 \mb u, \gamma_1\mb u) = \left(\begin{array}{cc}\mbb K^{00}& \mbb K^{01}\\\mbb K^{10}& \mbb K^{11}\end{array}\right)\left(\begin{array}{c}\gamma_0 \mb u\\\gamma_1 \mb u\end{array}\right) \in \mbb R^m\times \mbb R^m.
$$
Let $\mathsf A$ denote the differential expression $\mathsf A\mb u := \p_{xx}\mb u.$ Then we define the operators $\mb A^\alpha_\Phi$, $\alpha =\infty, 0, 1,$ by the restriction of $\mathsf A$ to the domains
\begin{eqnarray*}
D(\mb A^\infty_\Phi) &=& \{\mb u \in \mb C^2(I);\;L\mb u = \Phi \mb u\},\\
D(\mb A^0_\Phi) &=& \{\mb u \in \mb W^2_1(I);\;L\mb u = \Phi \mb u\},\\
D(\mb A^1_\Phi) &=& \{\mb u \in \mb W^3_1(I);\;L\mb u = \Phi \mb u\},
\end{eqnarray*}
respectively. If the base space is clear from the context, we shall drop the superscript from the notation.
Further, let us denote
\begin{equation}
\Phi^*\mb u = \mbb K^*(\gamma_0 \mb u, \gamma_1\mb u) = \left(\begin{array}{cc}\mbb K^{00\,T}& -\mbb K^{10\,T}\\-\mbb K^{01\,T}& \mbb K^{11\,T}\end{array}\right)\left(\begin{array}{c}\gamma_0 \mb u\\\gamma_1 \mb u\end{array}\right),
\label{phistar}
\end{equation}
where $\mbb K^T$ denotes the transpose of $\mbb K$. Clearly
\begin{equation}
(\Phi^*)^* = \Phi.
\label{phistar'}
\end{equation}
In general, if $\mb A$ is the generator of a semigroup,  we denote by \sem{\mb A} the semigroup generated by $\mb A$.
\subsection{Basic estimates}\label{ss31}
Consider the resolvent equation for (\ref{system1'}) with $\e=1,$
\begin{equation}
\la \mb u - \p_{xx} \mb u = \mb f, \quad x \in ]0,1[.
\label{reseq1}
\end{equation}
Its solution $\mb u$ is given by
\begin{equation}
\mb u(x) = \mb C_1e^{-\mu x} +\mb C_2 e^{\mu x} + \mb U_\mu(x)
\label{gensol1}
\end{equation}
where $0\neq \la = \mu^2 = |\la|e^{i\theta}$ with $\Re \mu >0,$
\begin{equation}
\mb U_\mu(x) = \frac{1}{2\mu} \cl{0}{1} e^{-\mu|x-s|}\mb f(s)ds.
\label{Umu}
\end{equation}
and  $\mb C_1$ and $\mb C_2$ are determined by the boundary conditions. Further, denote
$$
\Sigma_{\alpha,\theta_0}= \{\la = |\la|e^{i\theta}\in \mbb C;\;|\la|\geq \alpha, |\theta|\leq \theta_0 <\pi\}$$
and $ \Sigma_{0, \theta_0} = \Sigma_{\theta_0}.$

 We note that $\mb A_0$  corresponds to the Neumann boundary conditions in one of the spaces $\mb X = \mb C(I), \mb L_1(I), \mb W^1_1(I).$ Standard calculations \cite{EN,BFN2} give
  \begin{equation}
\|R(\la, \mb A_0)f\|_{\mb X} \leq \frac{2\|f\|_{\mb X}}{|\la|\cos \theta_0/2}, \qquad \la \in \Sigma_{\theta_0}
\label{resw11}
\end{equation}
for any $\theta_0<\pi$. Hence, in particular, $\mb A_0$ generates an analytic semigroup in $\mb X$.

With regards to (\ref{system1'}) in $\mb X=\mb C(I), \mb L_1(I), \mb W^1_1(I)$, we have 
\begin{theorem}\cite{BFN2}
 Then there is $\alpha\geq 0$ such that for any $\theta_0<\pi$
\begin{equation}
\|R(\la, \mb A_\Phi)\|_{\mb X} \leq  2\|R(\la, \mb A_0)\|_{\mb X}, \qquad \la \in  \Sigma_{\alpha, \theta_0}.
\label{grei0}
\end{equation}
Hence, $\mb A_\Phi$ generates an analytic semigroup in $\mb X$. Moreover, \sem{\mb A_\Phi} is positive if and only if
$-\mbb K^{00},  \mbb K^{11}$ are nonnegative off-diagonal and $-\mbb K^{01}, \mbb K^{10}$ are nonnegative.
\label{thgre}\end{theorem}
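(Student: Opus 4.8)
The plan is to realise $\mb A_\Phi$ as a \emph{boundary perturbation} of the Neumann operator $\mb A_0$, in the spirit of Greiner's calculus. Write $\mathsf A$ for the maximal realisation of $\p_{xx}$ (on $\mb C^2(I)$, resp.\ $\mb W^2_1(I)$), so that $\mb A_0=\mathsf A|_{\ker L}$ and $\mb A_\Phi=\mathsf A|_{\ker(L-\Phi)}$, and $\Phi\mb u=\mbb K\gamma\mb u$ with $\gamma\mb u:=(\gamma_0\mb u,\gamma_1\mb u)\in\mb Y$. First I would record the two explicit operators that (\ref{gensol1})--(\ref{Umu}) produce for $\mb f=0$: the Dirichlet solution operator $\mc E_\la\colon\mb Y\to\ker(\la-\mathsf A)$ (for $0\ne\la=\mu^2$, $\Re\mu>0$),
\[
\mc E_\la(\mb g_0,\mb g_1)(x)=\frac{\sinh(\mu(1-x))}{\sinh\mu}\,\mb g_0+\frac{\sinh(\mu x)}{\sinh\mu}\,\mb g_1 ,
\]
which satisfies $\gamma\mc E_\la=I_{\mb Y}$, together with its Dirichlet-to-Neumann matrix
\[
\mathsf N_\la:=L\mc E_\la=\begin{pmatrix}-\mu\coth\mu\; I&\mu(\sinh\mu)^{-1}I\\-\mu(\sinh\mu)^{-1}I&\mu\coth\mu\; I\end{pmatrix}.
\]
Substituting $\mb u=R(\la,\mb A_0)\mb f+\mc E_\la\mb c$ into $(\la-\mathsf A)\mb u=\mb f$, $L\mb u=\Phi\mb u$ and using $LR(\la,\mb A_0)=0$, $\gamma\mc E_\la=I_{\mb Y}$ gives $(\mathsf N_\la-\mbb K)\mb c=\mbb K\gamma R(\la,\mb A_0)\mb f$, hence the representation
\[
R(\la,\mb A_\Phi)=\bigl(I+\mc E_\la(\mathsf N_\la-\mbb K)^{-1}\mbb K\gamma\bigr)R(\la,\mb A_0),
\]
valid wherever the $2m\times2m$ matrix $\mathsf N_\la-\mbb K$ is invertible. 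This reduces everything to estimating a closed-form correction against $R(\la,\mb A_0)$, which is controlled by (\ref{resw11}).

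To prove (\ref{grei0}) I would bound the three factors of the correction from the explicit formulas: for $\la\in\Sigma_{\theta_0}$ one reads off from (\ref{Umu}) and the Neumann constants in (\ref{gensol1}) that the operator norm of $\gamma R(\la,\mb A_0)$ (as a map $\mb X\to\mb Y$) is $\le C|\mu|^{-1}$; when $\Re\mu$ is bounded below one has $\|\mathsf N_\la^{-1}\|\le C|\mu|^{-1}$, hence $\|(\mathsf N_\la-\mbb K)^{-1}\|\le 2\|\mathsf N_\la^{-1}\|$ once $|\la|$ is large, and $\|\mc E_\la\|\le C$; multiplying, the correction has operator norm $\le C\|\mbb K\|\,|\la|^{-1}$, dominated by $\|R(\la,\mb A_0)\|$ for $|\la|\ge\alpha$. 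The step I expect to be the main obstacle is obtaining this with a single $\alpha$ valid for \emph{every} $\theta_0<\pi$, i.e.\ uniformly as $\la$ approaches the negative real axis: there $\Re\mu\to0$ and all of $\mathsf N_\la$, $\mathsf N_\la^{-1}$, $\mc E_\la$, $\gamma R(\la,\mb A_0)$ become singular near the Neumann eigenvalues $-n^2\pi^2$ (through $\coth\mu$ and $(\sinh\mu)^{-1}$), and the crude product of norms fails. The way around it is to use that these singularities occur along \emph{matched directions} --- for $\mu$ near $in\pi$ the blow-up of $\mc E_\la$ and of $\mathsf N_\la^{-1}$ is carried by the vector $(\mb d,(-1)^n\mb d)$, which is annihilated by the rank-drop of $\mathsf N_\la$ and spans the range of $\gamma R(\la,\mb A_0)$ there, so the product $\mc E_\la(\mathsf N_\la-\mbb K)^{-1}\mbb K\gamma R(\la,\mb A_0)$ stays controlled by $\|R(\la,\mb A_0)\|$ --- and this is precisely where the sharp estimates of \cite{BFN2} enter. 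Granting (\ref{grei0}), analyticity is immediate: taking e.g.\ $\theta_0=3\pi/4$ in (\ref{resw11})--(\ref{grei0}) gives $\|R(\la,\mb A_\Phi)\|\le 4\bigl(|\la|\cos(3\pi/8)\bigr)^{-1}$ on $\{|\la|\ge\alpha,\ |\arg\la|\le 3\pi/4\}$, a sectorial bound on a sector of half-angle exceeding $\pi/2$, so $\mb A_\Phi-\alpha$ is sectorial and $\mb A_\Phi$ generates an analytic semigroup in $\mb X$.

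For the positivity equivalence I would argue via the positive minimum principle, working in $\mb X=\mb C(I)$ (the cases $\mb L_1(I)$ and $\mb W^1_1(I)$ then follow from consistency of the semigroups and density). Since $\mb A_\Phi$ is a generator, $\sem{\mb A_\Phi}$ is positive iff for every $\mb u\in D(\mb A_\Phi)_+$ and every $x_0\in I$, $i\in\mc M$ with $u_i(x_0)=0$ one has $(\mathsf A\mb u)_i(x_0)=\p_{xx}u_i(x_0)\ge0$. If $x_0\in\,]0,1[$ this is automatic, $u_i$ having an interior minimum there. If $x_0=0$, the interface condition gives $\p_x u_i(0)=(\Phi\mb u)_i=\sum_{j\ne i}K^{00}_{ij}u_j(0)+\sum_jK^{01}_{ij}u_j(1)$, which under the hypotheses ``$-\mbb K^{00}$ nonnegative off-diagonal'' and ``$-\mbb K^{01}\ge0$'' is $\le0$ because $u_j\ge0$, while $u_i(0)=0=\min u_i$ forces $\p_x u_i(0)\ge0$; hence $\p_x u_i(0)=0$, and then $u_i\ge0$ near $0$ with $u_i(0)=\p_x u_i(0)=0$ forces $\p_{xx}u_i(0)\ge0$. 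The case $x_0=1$ is identical with $(\mbb K^{11},\mbb K^{10})$ replacing $(\mbb K^{00},\mbb K^{01})$; thus the four sign conditions are sufficient. For necessity, if one of them fails --- say $K^{00}_{i_0j_0}>0$ for some $i_0\ne j_0$ --- I would exhibit $\mb u\in D(\mb A_\Phi)$ with $u_{i_0}(0)=0$, all other boundary values strictly positive and $u_{j_0}(0)$ large enough that $\p_x u_{i_0}(0)=(\Phi\mb u)_{i_0}>0$, taken nonnegative on $I$ (possible since $u_{i_0}$ then grows away from $x=0$) and with $\p_{xx}u_{i_0}(0)<0$; then $\delta_0\otimes e_{i_0}$ violates the minimum principle, so $\sem{\mb A_\Phi}$ is not positive, and the other three conditions are handled symmetrically. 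This matches the fact that these are exactly the conditions making the aggregated matrix $\mbb K^{10}-\mbb K^{00}+\mbb K^{11}-\mbb K^{01}$ of (\ref{ku}) positive off-diagonal.
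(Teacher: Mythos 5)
Your resolvent representation is sound, but it is not really a different route: since $L_\la=(L|_{\ker(\la-\mathsf A)})^{-1}=\mc E_\la\mathsf N_\la^{-1}$, your identity $R(\la,\mb A_\Phi)=\bigl(I+\mc E_\la(\mathsf N_\la-\mbb K)^{-1}\mbb K\gamma\bigr)R(\la,\mb A_0)$ is exactly Greiner's formula (\ref{grei1}) on which the paper (via \cite{Gre} and \cite{BFN2}) relies. The genuine gap is in the quantitative step. In the regime you yourself single out as ``the main obstacle'' you offer only the heuristic of ``matched directions'' and then defer to ``the sharp estimates of \cite{BFN2}'' --- but the theorem under proof \emph{is} the statement quoted from \cite{BFN2}, so this is circular, and the claimed cancellation is nowhere verified. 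Moreover you are aiming at the wrong target: the estimate (\ref{grei0}) is to be read, and is later used, with $\alpha$ depending on $\theta_0$ (this is what Greiner's Theorem~2.4 gives, and it is visible in the paper's own computation of $L_{\e\la}$, where every bound carries a factor $\cos\theta_0/2$, cf.\ (\ref{ela})). With that reading your crude factor-by-factor bounds already suffice, because on $\Sigma_{\alpha(\theta_0),\theta_0}$ one has $\Re\mu\geq|\la|^{1/2}\cos(\theta_0/2)$ and all your kernel estimates hold; whereas a single $\alpha$ valid for \emph{every} $\theta_0<\pi$ would force the spectrum of the non-self-adjoint operator $\mb A_\Phi$ to stay away, in argument, from the negative real axis at all large moduli, which one cannot expect for $m\geq2$ (the splitting of the $m$-fold Neumann eigenvalues $-n^2\pi^2$ is governed by a real but non-symmetric matrix built from the $\mbb K^\omega$ and may be non-real), and which your argument certainly does not establish. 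So the step you declare essential is both unproved and, in the form you state it, not the right statement; delete it, fix the quantifiers, and prove smallness of the correction on $\Sigma_{\alpha(\theta_0),\theta_0}$ from the explicit kernels, which you essentially already have.

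A second gap concerns $\mb X=\mb L_1(I)$ and positivity. You treat all three spaces on the same footing, but the trace $\gamma$, hence $\Phi$, is unbounded on $\mb L_1(I)$ --- precisely the difficulty the paper flags and which \cite{BFN2} circumvents through the duality identity (\ref{resnorm}) with the adjoint boundary operator $\Phi^*$ of (\ref{phistar}) acting in $\mb C(I)$. Your formula can be salvaged in $\mb L_1(I)$ because $\gamma R(\la,\mb A_0)\colon\mb L_1(I)\to\mb Y$ and $\mc E_\la\colon\mb Y\to\mb L_1(I)$ have explicit kernels with the required bounds, but you never verify this, and your transfer of positivity to $\mb L_1(I)$ ``by consistency of the semigroups and density'' is asserted rather than proved; note that the paper's duality route replaces $\Phi$ by $\Phi^*$, so one must also check (it is true) that the sign conditions on $\mbb K^{00},\mbb K^{01},\mbb K^{10},\mbb K^{11}$ are invariant under this swap. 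Finally, your positive-minimum-principle argument in $\mb C(I)$ is the right idea for sufficiency, but the necessity part glosses over the construction: you must exhibit a \emph{nonnegative} $\mb u\in D(\mb A_\Phi)$ realizing the prescribed endpoint values and the derivatives dictated by the interface conditions in \emph{all} components simultaneously (choosing the other endpoint values strictly positive is exactly what makes this possible), and with $\p_{xx}u_{i_0}(0)<0$; as written this is a sketch, not a proof.
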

For further use we need some intermediate results from the proof. First, since in $\mb X=\mb C(I), \mb W^1_1(I),$ the operator $\Phi$ in the boundary condition $L\mb u = \Phi\mb u$ is compact (and thus also bounded), Theorem \ref{thgre} is a straightforward consequence of Theorem 2.4 of Ref.~ \cite{Gre}. More precisely, it follows that
\begin{equation}
R(\la, \mb A_\Phi) = (I-L_\la\Phi)^{-1}R(\la, \mb A_0),
\label{grei1}
\end{equation}
where $L_\la = (L|_{Ker (\la - \mb A)})^{-1}, \la \in \rho(\mb A_0)$ and the proof relies on the fact that if $\Phi$ is compact and $\mb A_0$ generates an analytic semigroup, then there is $\alpha\geq 0$ such that for
 $\la \in \Sigma_{\alpha,\theta_0}$ we have $|||L_\la\Phi|||_{\mb X}\leq 1/2$  which, by the Neumann series, gives (\ref{grei0}).

The situation with the generation in $\mb X = \mb L_1(I)$ is more complicated as $\Phi$ is not bounded, and so cannot be compact, on $\mb X$. A more involved argument shows that the resolvent $R(\la, A^0_\Phi)$ exists in the same sector as $R(\la, A^1_\Phi)$ and, moreover, $(R(\la, A^\infty_{\Phi^*}))^*\mb f = R(\bar \la, A^1_\Phi)\mb f$ whenever $f\in \mb L_1(I)$ (the adjoint of   $R(\la, A^\infty_{\Phi^*})$ acts in the space of complex vector measures and here we identify $\mb L_1(I)$ with the subspace of (densities of) absolutely continuous measures). Then, since the norm of an absolutely continuous measure equals the $L_1$ norm of its density \cite{Bobks}, we arrive at
\begin{equation}
\|R(\la, \mb A^0_\Phi)\|_{\mb L_1(I)} = \|R(\bar\la, \mb A^\infty_{\Phi^*})\|_{\mb C(I)}.
\label{resnorm}
\end{equation}
Being densely defined,  $\mb A^0_\Phi$  generates an analytic semigroup on $\mb L_1(I)$.

\subsection{A lifting theorem}
In asymptotic analysis we need results on solvability of the inhomogeneous problem
\begin{eqnarray}
 \p_t \mb u(x,t) &=& d\p_{xx}\mb u(x,t) +\mb f(x,t), \qquad (x,t) \in ]0,1[\times \mbb R_+,\nn\\
 \p_x \mb u(0,t)&=& \mbb K^{00}\mb u(0,t) + \mbb K^{01}\mb u(1,t) + \bphi_0(t),\qquad t>0, \nn\\
 \p_x \mb u(1,t)&=&  \mbb K^{10} \mb u(0,t)+\mbb K^{11} \mb u(1,t) + \bphi_1(t), \qquad t>0,\nn\\
 \mb u(x,0) &=& \mathring{\mb u}(x), \qquad x\in ]0,1[,
 \label{sys2}
\end{eqnarray}
where $\mb f, \bphi_0$ and $\bphi_1$ are known functions and $d>0$.  First, we observe that  $\mb v(x) = -x(1-x)((\bal+\bbet)x -\bal)$ satisfies
\begin{eqnarray}
 \p_x \mb v(0)&=& \mbb K^{00}\mb v(0) + \mbb K^{01}\mb v(1) + \bal, \nn\\
 \p_x \mb v(1)&=&  \mbb K^{10} \mb v(0)+\mbb K^{11} \mb v(1) + \bbet.\label{lftth}
 \end{eqnarray}\label{lift}
 Thus, if $\mb v$ is the function constructed above with $\bal = \bphi_0$ and $\bbet = \bphi_1$ then, by substitution $\mb U = \mb u -\mb v,$  problem (\ref{sys2}) is reduced to
\begin{eqnarray}
 \p_t \mb U(x,t) &=& d\p_{xx}\mb U(x,t) +\mb f(x,t) + 2d((\bphi_0(t) +\bphi_1(t))x -2\bphi_0(t)-\bphi_1(t)) \nn\\
 &&- x^3(\p_t\bphi_0(t) +\p_t\bphi_1(t)) -x^2(2\p_t\bphi_0(t)+\p_t\bphi_1(t)) + x\p_t\bphi_0(t) ,\nn\\
 \p_x \mb U(0,t)&=& \mbb K^{00}\mb U(0,t) + \mbb K^{01}\mb U(1,t) \nn\\
 \p_x \mb U(1,t)&=&  \mbb K^{10} \mb U(0,t)+\mbb K^{11} \mb U(1,t), \qquad t>0,\nn\\
 \mb U(x,0) &=& \mathring{\mb u}(x)+x(1-x)((\bphi_0(0)+\bphi_1(0))x -\bphi_0(0)),
 \label{sys3}
\end{eqnarray}
  which is classically solvable, \cite{Pa}, on $]0,T[$ for some $T>0$ provided e.g. the inhomogeneity is locally H\"{o}lder continuous on $]0,T[$.

\subsection{Basic results on solvability of (\ref{system1'})}

Let us return to (\ref{system1'}).  Let $\e>0$.
 We denote by $\mb A_{\e,\Phi}$ the operator given by the restriction of the expression $\mb u \to \mathsf A_\e \mb u= \e^{-1}\p_{xx}\mb u$ to the domain
$
D(\mb A_{\e,\Phi}) = \{\mathbf{u}\in \mb X;\; \p_{xx}\mb u \in \mb X,   L\mb u = \e \Phi \mb u\},
$
where $\mb X = \mb C(I)$ or $\mb X = \mb L_1(I)$. It is easy to see, by (\ref{grei0}) and (\ref{resnorm}),  that there exists a positive analytic semigroup \sem{\mb A_{\e,\Phi}} in $\mb X$ that solves (\ref{system1'}). We show uniform boundeness of \sem{\mb A_{\e,\Phi}} with respect to $\e$.
\begin{lemma} For any $\mb X=\mb C(I), \mb W^1_1(I), \mb L_1(I),$ there are constants $M_\phi$ and $\omega_\phi$, independent of $\e$, such that
\begin{equation}
\|e^{t\mb A_{\e,\Phi}}\|_{\mc L(\mb X)} \leq M_\Phi e^{\omega_\Phi t}, \qquad t\geq 0.
\label{type}
\end{equation}
\end{lemma}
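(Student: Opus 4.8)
The plan is to strip off the singular time-scaling and reduce the claim to a spectral-perturbation fact near the origin. Put $\mb B_\e:=\e\mb A_{\e,\Phi}$, i.e.\ the realization of $\p_{xx}$ in $\mb X$ on the domain $\{\mb u:\p_{xx}\mb u\in\mb X,\ L\mb u=\e\Phi\mb u\}$; this is the operator $\mb A_\Phi$ of Theorem~\ref{thgre} with $\mbb K$ replaced by $\e\mbb K$, and $e^{t\mb A_{\e,\Phi}}=e^{(t/\e)\mb B_\e}$. Thus (\ref{type}) is equivalent to the existence of $M_\Phi,c_\Phi$, independent of $\e\in(0,1]$, with $\|e^{\tau\mb B_\e}\|_{\mc L(\mb X)}\le M_\Phi e^{c_\Phi\e\tau}$ for all $\tau\ge0$. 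Fix $\theta_0\in(\pi/2,\pi)$. For $\mb X=\mb C(I)$ or $\mb W^1_1(I)$ the boundary operator $\e\Phi$ is bounded and (\ref{grei1}) is available for $\mb B_\e$; the case $\mb X=\mb L_1(I)$ follows by the same scheme once every resolvent bound is transferred to $\mb C(I)$ through (\ref{resnorm}) (note $(\e\Phi)^*=\e\Phi^*$), so below I argue in the bounded case.

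The first step is uniform-in-$\e$ resolvent bounds for $\mb B_\e$. Since $\e\le1$ we have $|||L_\la\e\Phi|||\le|||L_\la\Phi|||$, so the estimate behind Theorem~\ref{thgre} gives $\alpha_0\ge0$, independent of $\e$, with $|||L_\la\e\Phi|||\le1/2$ on $\Sigma_{\alpha_0,\theta_0}$, whence $R(\la,\mb B_\e)=(I-L_\la\e\Phi)^{-1}R(\la,\mb A_0)$ and $\|R(\la,\mb B_\e)\|\le2\|R(\la,\mb A_0)\|$ there. Near $\la=0$ the operator $L_\la$ blows up (because $0\in\sigma(\mb A_0)$), but only of order $|\la|^{-1}$: from the explicit form of $\ker(\la-\p_{xx})$ behind (\ref{gensol1})--(\ref{Umu}) one reads off $|||L_\la\Phi|||\le C_0|\la|^{-1}$ on $\Sigma_{0,\theta_0}$ for $0<|\la|\le1$ (part of the analysis of \cite{BFN2}). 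Hence $|||L_\la\e\Phi|||\le1/2$ already for $|\la|\ge2C_0\e$, and since $L_\la\Phi$ is bounded on the compact set $\{2C_0\e\le|\la|\le\alpha_0\}$ there are $\e_0\in(0,1]$ and $c_*>0$ such that, for all $\e\le\e_0$,
\[
\Sigma_{c_*\e,\theta_0}\subset\rho(\mb B_\e),\qquad
\|R(\la,\mb B_\e)\|\le2\|R(\la,\mb A_0)\|\le\frac{4}{|\la|\cos(\theta_0/2)}\quad\mbox{on}\quad\Sigma_{c_*\e,\theta_0},
\]
the last inequality by (\ref{resw11}). Since $\theta_0>\pi/2$, this forces $\Re\la\le c_*\e\cos\theta_0<0$ for every $\la\in\sigma(\mb B_\e)$ with $|\la|\ge c_*\e$; in particular $\sigma(\mb B_\e)\cap\{\Re\la\ge0\}\subset\{|\la|<c_*\e\}$.

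Next I split off the slow modes. The Neumann operator $\mb A_0$ has $\la=0$ as a semisimple eigenvalue of multiplicity $m$ — its eigenspace is the constant vectors, and if $\p_{xx}\mb u$ is constant with vanishing endpoint derivatives then it is zero — isolated in $\sigma(\mb A_0)$, with spectral projection $\mb P_0\mb u=\int_0^1\mb u(x)\,dx$ and $\mb A_0\mb P_0=0$. Fix $r_1>0$ smaller than the distance from $0$ to the rest of $\sigma(\mb A_0)$. On the annulus $\{r_1/2\le|\la|\le2r_1\}$ both $R(\la,\mb A_0)$ and $L_\la\Phi$ are bounded, so after shrinking $\e_0$ we may assume $\{|\la|=r_1\}\subset\rho(\mb B_\e)$ and, by (\ref{grei1}), $\|R(\la,\mb B_\e)-R(\la,\mb A_0)\|=\|(I-L_\la\e\Phi)^{-1}L_\la\e\Phi R(\la,\mb A_0)\|\le C\e$ on that circle. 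Put $\mb P_\e:=\frac1{2\pi i}\oint_{|\la|=r_1}R(\la,\mb B_\e)\,d\la$; Cauchy-integral estimates then yield $\|\mb P_\e-\mb P_0\|\le C\e$ and, using $\mb A_0\mb P_0=0=\frac1{2\pi i}\oint_{|\la|=r_1}\la R(\la,\mb A_0)\,d\la$ together with $\mb B_\e\mb P_\e=\frac1{2\pi i}\oint_{|\la|=r_1}\la R(\la,\mb B_\e)\,d\la$, also $\|\mb B_\e\mb P_\e\|\le C\e$. Thus $\mathrm{rank}\,\mb P_\e=m$ and $\mb P_\e$ reduces $\mb B_\e$. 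On the $m$-dimensional subspace $\mathrm{Ran}\,\mb P_\e$ the operator $\mb B_\e$ restricts to a bounded operator of norm $\le C\e$, so $\|e^{\tau\mb B_\e}\mb P_\e\|\le(\|\mb P_0\|+1)e^{C\e\tau}$. On $\ker\mb P_\e$ the spectrum of $\mb B_\e$ lies in $\{\Re\la<0\}$, and combining the sectorial bound above for $|\la|>r_1$ with the maximum principle applied to $\la\mapsto R(\la,\mb B_\e)(I-\mb P_\e)$ (holomorphic on $\{|\la|\le r_1\}$ as the resolvent of the part of $\mb B_\e$ in $\ker\mb P_\e$) gives $\|R(\la,\mb B_\e)(I-\mb P_\e)\|\le C(1+|\la|)^{-1}$ for $\Re\la\ge0$, uniformly in $\e$; hence $\|e^{\tau\mb B_\e}(I-\mb P_\e)\|\le M$ uniformly. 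Adding the two pieces, $\|e^{\tau\mb B_\e}\|\le M_\Phi e^{C\e\tau}$ for $\e\le\e_0$, i.e.\ $\|e^{t\mb A_{\e,\Phi}}\|\le M_\Phi e^{Ct}$. Finally, for $\e\in[\e_0,1]$ the uniform bound $\|R(\la,\mb B_\e)\|\le2\|R(\la,\mb A_0)\|$ on $\Sigma_{\alpha_0,\theta_0}$ gives $\|e^{\tau\mb B_\e}\|\le Me^{\alpha_0\tau}$, hence $\|e^{t\mb A_{\e,\Phi}}\|\le Me^{(\alpha_0/\e_0)t}$; taking the larger of the constants yields (\ref{type}).

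The hard part is the resolvent control near $\la=0$ that is uniform in $\e$: one must show that the boundary perturbation $\e\Phi$ displaces the $m$-fold Neumann eigenvalue $0$ by only $O(\e)$ while leaving the rest of $\sigma(\mb A_0)$ essentially in place, and that every constant in these statements is $\e$-independent. Quantitatively this rests on the $|\la|^{-1}$ estimate for $|||L_\la\Phi|||$ as $\la\to0$, which must be extracted from the explicit representation of $L_\la$ behind (\ref{gensol1})--(\ref{Umu}) established in \cite{BFN2}; in the $\mb L_1(I)$ case one must in addition route everything through the $\mb C(I)$-adjoint, where $\Phi^*$ rather than $\Phi$ is bounded, via (\ref{resnorm}).
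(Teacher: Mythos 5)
Your argument is correct in substance, and its engine is the same as the paper's: Greiner's formula (\ref{grei1}), the $\e$-independent Neumann resolvent estimate (\ref{resw11})--(\ref{resw11e}), the $O(|\la|^{-1})$ blow-up of $L_\la$ near $\la=0$ read off from the explicit kernel behind (\ref{gensol1})--(\ref{Umu}) (so that the factor $\e$ in the boundary operator makes $L_\la\e\Phi$ uniformly small once $|\la|\geq c\e$), and the duality relation (\ref{resnorm}) to reach $\mb L_1(I)$. Where you differ is in packaging and in the second half. The paper does not rescale time: it keeps $\e^{-1}\p_{xx}$, observes $L_{\la,\e}=L_{\e\la}$, computes $L_{\e\la}$ explicitly (using (\ref{estemu})) and obtains $\|\e L_{\la,\e}\Phi\mb u\|\leq M_1|\la|^{-1/2}\|\mb u\|$ on a fixed sector $\Sigma_{\alpha,\theta_0}$ with constants independent of $\e$ ((\ref{ela}), (\ref{ela'})); combined with (\ref{resw11e}) this is already a uniform sectorial estimate for $\mb A_{\e,\Phi}$ itself in a fixed sector, and (\ref{type}) follows at once from the standard generation theorem for analytic semigroups. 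Your first step is exactly this estimate in rescaled form (a uniform sectorial bound for $\mb B_\e=\e\mb A_{\e,\Phi}$ on $\Sigma_{c_*\e,\theta_0}$ with $\theta_0>\pi/2$), and from it alone the claim follows: shifting by $2c_*\e$ makes $\mb B_\e$ sectorial with $\e$-independent constants, whence $\|e^{\tau\mb B_\e}\|\leq Me^{2c_*\e\tau}$, i.e.\ (\ref{type}). The entire Riesz-projection slow/fast splitting in your second half is therefore superfluous for the lemma; it is not wrong (and it yields a bit more, namely uniform boundedness of $e^{\tau\mb B_\e}$ modulo an $m$-dimensional slow part, which is in the spirit of the later asymptotics), but it rests on further assertions (operator-valued maximum principle, uniform semigroup bound for the fast part from resolvent bounds) that you state rather than verify. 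Two smaller points: the set $\{2C_0\e\leq|\la|\leq\alpha_0\}$ is not a fixed compact set, so the uniformity there should be obtained by using the $C_0|\la|^{-1}$ bound on the whole punctured sector up to $|\la|\leq 1$ and compactness only on the $\e$-independent region $1\leq|\la|\leq\alpha_0$; and the crucial quantitative input, the $|\la|^{-1}$ (in $\mb W^1_1(I)$ in fact $|\la|^{-1/2}$ after composing with the trace) behaviour of $L_\la$, is precisely what the paper derives inside this proof from the explicit formula, rather than an external fact to be cited from \cite{BFN2}.
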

\begin{proof} First we observe that since the resolvent equation
$
\la \mb u - \e^{-1} \p_{xx}\mb u = \mb f
$
can be written as $
{\e\la} \mb u - \p_{xx}\mb u = \e{\mb f}
$
and the Neumann boundary condition does not involve  $\e$,  estimate (\ref{resw11}) is uniform in $\e$; that is, for $\mb X=\mb C(I), \mb L_1(I), \mb W^1_1(I)$,
\begin{equation}
\|R(\la, \e^{-1}A_0)\mb f\|_{\mb X} \leq \frac{2\|\mb f\|_{\mb X}}{|\la|\cos \theta_0/2}, \qquad \la \in \Sigma_{\theta_0}.
\label{resw11e}
\end{equation}
Then, as indicated below (\ref{grei1}),  the generation result in $\mb X =\mb C(I)$ and in $\mb X=\mb W^1_1(I)$ follows due to possibility of establishing the estimate $|||L_\la\Phi|||_{\mb X} \leq 1/2$ in $\Sigma_{\alpha, \theta_0}$.  Here we have to show that for (\ref{system1'}) such an estimate is valid uniformly in $\e$.

In (\ref{system1'}) we deal with the family of operators $\{\e^{-1}\mb A\}_{\e>0}$, hence we have to introduce the family $L_{\la, \e} = (L|_{Ker (\la - \e^{-1}\mb A)})^{-1}$. Since $Ker (\la-\e^{-1}\mb A) = Ker (\e\la-\mb A),$ we have $L_{\la,\e} = L_{\e\la, 1} = L_{\e\la}$, where the latter refers to the case when the diffusion coefficients equals 1, see Subsection \ref{ss31}.  We see that  $Ker (\la-\e^{-1}\mb A)$ consists of functions given by
$
\mb u(x) = \mb C_1e^{\mu x} + \mb C_2e^{-\mu x},
$  where $\mu^2 = \e\la$.
Hence $L\mb u = (\mu \mb C_1-\mu \mb C_2, \mu \mb C_1e^\mu -\mu \mb C_2e^{-\mu})$. Thus, for a given $(\mb d_1, \mb d_2)\in \mbb R^m\times \mbb R^m$ we have
$$
L_{\e\la}(\mb d_1,\mb d_2) = \frac{\mb d_2-\mb d_1e^{-\mu }}{\mu(e^{\mu}-e^{-\mu})}e^{\mu x} + \frac{\mb d_1e^{\mu }-\mb d_2}{\mu(e^{\mu}-e^{-\mu})}e^{-\mu x}.
$$
Then, for $\mb X=\mb L_1(I)$ we have
\begin{eqnarray*}
&&\|L_{\e\la}(\mb d_1,\mb d_2)\|_{\mb X}\\
 &&\leq \frac{e^{-\Re\mu}(\|\mb d_2\|+\|\mb d_1\|e^{-\Re\mu })(e^{\Re\mu}\!\!-\!\!1)}{|\mu|\Re\mu(1-\!\!e^{-2\Re\mu})} +
\frac{e^{-\Re\mu}(\|\mb d_2\|+\|\mb d_1\|e^{\Re\mu })(1-e^{-\Re\mu})}{|\mu|\Re\mu(1\!\!-e^{-2\Re\mu})}\\
&&\leq \frac{\|\mb d_2\|+\|\mb d_1\|e^{-\Re\mu }}{|\mu|\Re\mu(1+e^{-\Re\mu})} +
\frac{\|\mb d_2\|e^{-\Re\mu }+\|\mb d_1\|}{|\mu|\Re\mu(1+e^{-\Re\mu})}\leq \frac{C}{\e|\la|\cos\theta_0/2}\|(\mb d_1,\mb d_2)\|,
\end{eqnarray*}
where $C$ is independent of $\e$ and we used the estimate
\begin{equation}
\left |\frac{1}{e^\mu-e^{-\mu}}\right| = e^{-\Re \mu}\left|\sum\limits_{n=0}^{\infty}e^{-2n\mu}\right| \leq
e^{-\Re \mu}\sum\limits_{n=0}^{\infty}e^{-2n\Re \mu} = \frac{e^{-\Re \mu}}{1-e^{-2\Re\mu}}.
\label{estemu}
\end{equation}
For $\mb X=\mb W^1_1(I)$ we additionally need to estimate $\p_xL_{\e\la}(\mb d_1,\mb d_2)$. Since the differentiation only introduces the multiplier $\pm\mu$ in each term, we have
$$
\|\p_xL_{\e\la}(\mb d_1,\mb d_2)\|_{\mb L_1(I)}\leq \frac{C}{\sqrt{\e|\la|}\cos\theta_0/2}\|(\mb d_1,\mb d_2)\|.
$$
Hence, noting that in (\ref{grei1}) the operator $\Phi$ is replaced by $\e\Phi$ and $(\mb d_1,\mb d_2) = \e\Phi\mb u$ with $\|(\mb d_1,\mb d_2)\|\leq \e M_0\|\mb u\|_{\mb W^1_1(I)}$ if $\mb u \in \mb W^1_1(I),$ with $M_0$ independent of $\e$, we obtain
\begin{equation}
\|\e L_{\la,\e}\Phi\mb u\|_{\mb W^1_1(I)} \leq \frac{M_1}{\sqrt{|\la|}} \|\mb u\|_{\mb W^1_1(I)},
\label{ela}
\end{equation}
where $M_1$ is a constant independent of $\e$ and $\la\in \Sigma_{\alpha, \theta_0}$ some fixed $\alpha>0$ .

For $\mb C(I),$ we observe that the norm in $\mb W^1_1(I)$ is stronger that that in $\mb C(I)$ and also $\|(\mb d_1,\mb d_2)\|\leq \e M'\|\mb u\|_{\mb C(I)}$ if $\mb u \in \mb C(I),$ with  $M'$ independent of $\e$. Hence
\begin{equation}
\|\e L_{\la,\e}\Phi\mb u\|_{\mb C(I)}\leq C'\|\e L_{\la,\e}\Phi\mb u\|_{\mb W^1_1(I)} \leq \frac{M_2}{\sqrt{|\la|}} \|\mb u\|_{\mb C(I)},
\label{ela'}
\end{equation}
where $M_2$ is independent of $\e$ and $\|\la\|$ and $C'$ is the embedding constant.

Finally, the result for $\mb L_1(I)$ follows from (\ref{resnorm}) and (\ref{ela'}). Thus (\ref{type}) holds. \end{proof}

\subsection{Asymptotic state lumping}
\subsubsection{Formal expansion}\label{ssFE}
We follow the standard asymptotic analysis approach, \cite{BaLabook}. First, we find the hydrodynamic space $\mb V$ of $\mb A_{\e,\Phi}$; that is, the null space of $\mb u\to \mbb \p_{xx}\mb u$ on $D(\mb A_{0,0}) = D(\mb A_{0})$. This amounts to finding the solution of $m$ uncoupled Neumann problems
\begin{equation}
\p_{xx}u_{i} = 0,\qquad \p_x u_i(0) = \p_x u_i(1) =0.
\label{npr}
\end{equation}
Hence,  $\mb V = \mathrm{ span}\{\mb e_i\}_{i\in \mc M}$,
where $\mathbf{e}_{i}$ are versors of $\mbb R^m$.  The (formal) spectral projection onto $\mb V$ is given by
\ben\label{projection}
\mb v = \mathcal{P}\mathbf{u} = \left(\int_{0}^{1}u_{1}(x)dx, \ldots \int_{0}^{1}u_{m}(x)dx\right),
\een
see (\ref{projection0}).  Hence we decompose  \begin{equation}\mathbf{u}_{\epsilon}(x, t) = [\mathcal{P}\mathbf{u}_{\epsilon}]( t) + [\mc Q\mathbf{u}_{\epsilon}](x, t) =:  \mathbf{v}_{\epsilon}(t) + \mathbf{w}_{\epsilon}(x, t),\label{pq}\end{equation}
 where $\mc Q = I - \mathcal{P}$.  Further, to shorten notation,  we denote
\begin{eqnarray*}
&&\mbb K =  \mbb K^{10}-\mbb K^{00} +\mbb K^{11}-\mbb K^{01},\\
&&\mbb K^1_+ = \mbb K^{10}+\mbb K^{11},\qquad \mbb K^0_+ = \mbb K^{01}+\mbb K^{00},\\
&&\mbb K^1_- = \mbb K^{11}-\mbb K^{01},\qquad \mbb K^0_- = \mbb K^{10}-\mbb K^{00}.
\end{eqnarray*}
 If $\mb u_\e$ is a classical solution to (\ref{system1'}), we can apply $\mathcal{P}$ to get
\begin{eqnarray}
\partial_t \mathbf{v}_\e(x,t) \!\!\!&=& \!\!\! \frac{1}{\epsilon}\mathcal{P}\partial_{xx}\mathbf{u}_\e(x,t) \!= \! \frac{1}{\epsilon}\left(\int_{0}^{1}\partial_{xx}u_{\e,1}(x, t)dx, \ldots, \int_{0}^{1}\partial_{xx}u_{\e, m}(x, t)dx\right)\nn\\
& =& \!\!\!\frac{1}{\epsilon} \left(\partial_{x}\mathbf{u}_\e(1, t) - \partial_{x}\mathbf{u}_\e(0, t)\right)\nn\\
&=&\!\!\! (\mbb K^{10}-\mbb K^{00})\mb u_\e(0,t) + (\mbb K^{11}-\mbb K^{01}) \mb u_\e(1,t) \nn\\
&=& \!\!\!\mbb K \mb v_\e(t) + \mbb K_-^{0}\mb w_\e(0,t) +\mbb K_-^{1} \mb w_\e(1,t).\label{veq}
\end{eqnarray}
Similarly, applying $\mathcal{Q} = (I - \mathcal{P})$ to (\ref{system1'}), we get
\begin{eqnarray}
 \p_t \mb w_\e (x,t)&=& \partial_{t}(\mathbf{u}_\e(x,t)-\mb v_\e(x,t)) = \frac{1}{\epsilon} \partial_{xx} (\mathbf{v}_{\epsilon}(x,t)+ \mathbf{w}_{\epsilon}(x,t)) - \p_t\mb v_\e(x,t)\nn\\
& =& \frac{1}{\epsilon}\partial_{xx} \mathbf{w}_{\epsilon} (x,t)-\mbb K \mb v_\e(t) - \mbb K_-^{0}\mb w_\e(0,t) -\mbb K_-^{1} \mb w_\e(1,t).\label{weq}
\end{eqnarray}
The boundary conditions for $\mb w_\e$ at $x=1$ are found from
\begin{eqnarray}
\left.\partial_{x}\mathbf{w}_{\epsilon}(x,t)\right\vert_{x=1} &=&  \left.\partial_{x}\mathbf{u}_{\epsilon}(x,t)\right\vert_{x = 1} = \epsilon \mbb K^{11}\mathbf{u}_{\epsilon}(1, t) + \epsilon \mbb K^{10} \mathbf{u}_{\epsilon}(0, t)\nn\\
& = &\epsilon \mbb K^{1}_+\mathbf{v}_{\epsilon}(t) + \epsilon \mbb K^{11}\mathbf{w}_{\epsilon}(1, t) + \epsilon \mbb K^{10}\mathbf{w}_{\epsilon}(0, t).
\label{wx1}
\end{eqnarray}
Similarly, at $x=0$ we have
\begin{equation}
\left.\partial_{x}\mathbf{w}_{\epsilon}(x,t)\right\vert_{x=0} = \epsilon \mbb K^{0}_+\mathbf{v}_{\epsilon}(t) + \epsilon \mbb K^{01}\mathbf{w}_{\epsilon}(1, t) + \epsilon \mbb K^{00}\mathbf{w}_{\epsilon}(0, t).
\label{wx0}
\end{equation}
Now, expanding $\mathbf{w}_{\epsilon} = \bar{\mb w}_0 +\e\bar{ \mb w}_1 + O(\e^2),$ and substituting it into (\ref{weq})--(\ref{wx0}), we find,  for $\epsilon^{0},$
\begin{equation}
\p_{xx}\bar{\mathbf{w}}_{0}  = 0 \quad \p_x\bar{\mathbf{w}}_{0}(1, t)  = \p_x\bar{\mathbf{w}}_{0}(0, t)  = 0.
\label{EEq6}
\end{equation}
From this equation we conclude that $\mathbf{w}_{0}$ is constant. Since, by (\ref{pq}), $\mathbf{w}_{\epsilon}$ and thus all terms of its expansion  annihilate constants, we conclude that $\bar{\mathbf{w}}_0 = 0$. Hence the terms containing $\mb w_\e$ in (\ref{veq}) are of higher order in $\e$ and we can consider the following limit equation for $\mb v_\e$
\begin{equation}
\p_t{\bar{\mb v}} = \mbb K \bar{\mb v},\quad  \bar{\mb v}(0) = \mc P\!\init{\mb u}.
 \label{barv}
 \end{equation}
Next, at the  $\epsilon$ level, we have
\begin{eqnarray}
&&\p_{xx}\bar{\mathbf{w}}_{1} = \mbb K\bar{\mathbf{v}}(t),\nn \\
&&\p_x\bar{\mathbf{w}}_{1}(1, t) = \mbb K^1_+ \bar{\mathbf{v}}(t), \qquad
\p_x\bar{\mathbf{w}}_{1}(0, t) = \mbb K^0_+ \bar{\mathbf{v}}(t),
\label{w1}
\end{eqnarray}
where $\bar{\mathbf{v}}$ is given by (\ref{barv}). Integrating, using the boundary conditions and the definition of $\mb W$, we find
\begin{equation}
\bar{\mb w}_1(x,t) = \frac{1}{2}x^2 \mbb K\bar{\mathbf{v}}(t) + x \mbb K^0_+ \bar{\mathbf{v}}(t) + \mb d(t)
\label{w1sol}
\end{equation}
with
\begin{equation}
\mb d(t) = -\left(\frac{1}{3}\mbb K^0_+  + \frac{1}{6} \mbb K^1_+\right)\bar{\mb v}(t).
\label{w1sol1}
\end{equation}
Clearly, the pair $(\bar {\mb v}, \e\bar {\mb w}_1)$ cannot be an approximation of $\mb u_\e = (\mb v_\e, \mb w_\e)$ unless $\mb w_\e|_{t=0} = O(\e)$. To improve the approximation we introduce the initial layer by rescaling time as $\tau = t/ \epsilon$ and expand  both $\tilde{\mathbf{v}}(x, \tau)$ and $\tilde{\mathbf{w}}(x,\tau)$  in $\epsilon:$
$$
\tilde{\mathbf{v}}(x,\tau) = \tilde{\mathbf{v}}_0(x,\tau)  + O(\e),
\qquad \tilde{\mathbf{w}}(x,\tau) = \tilde{\mathbf{w}}_0(x,\tau) + O(\e).$$
  Standard calculations give $\tilde{\mathbf{v}}_0 = 0$ and, using  (\ref{weq}), at the $\e^0$ level, we get
\begin{eqnarray}
\partial_{\tau} \tilde{\mathbf{w}}_0(x,\tau) & =& \partial_{xx}\tilde{\mathbf{w}}_0(x,\tau), \quad
\partial_{x}\tilde{\mathbf{w}}_0(1, \tau) = \partial_{x}\tilde{\mathbf{w}}_0(0, \tau) = 0,\nn\\
\ti{\mb w}_0(x,0)&=& \mathring{\mb w}(x) = \mathring{\mb u}(x)-\mc P\!\mathring{\mb u}.
\label{Fourier}
\end{eqnarray}
Since $\int_0^1 \ti{\mb w}_0(x,0)dx =0$, the solution can be written as the Fourier series
\ben\label{F-solution}
\tilde{\mathbf{w}}_0(x,\tau) =  \sum_{n = 1}^{\infty}e^{\lambda_n\tau}\mathbf{a}_{n}\cos(n\pi x),
\een
where $\lambda_n =- (n \pi)^2 < 0,$ $n=1,\ldots,$  and
\begin{equation}
\mb a_n = 2\cl{0}{1}\mathring{\mb w}(x) \cos(n\pi x) dx.
\label{an}
\end{equation}
Now, we can formulate the main theorem of this section.
\begin{theorem}
Let $\mb X=\mb C(I), \mb L_1(I)$. Let $\mb u_\e(t) = e^{t\mb A_{\e, \Phi}}\mathring{\mb u}$ with $\mathring{\mb u}\in \mb W^2_1(I)$ be the solution of (\ref{system1'}) in $\mb X$,  $\bar{\mb v}$ be the solution to (\ref{barv}) and $\ti{\mb w}_0$ be the solution to (\ref{Fourier}). Then, for any $0<T<\infty,$ there is  $C= C(T, \mbb K^{00}, \mbb K^{01}, \mbb K^{10}, \mbb K^{11})$ such that
\begin{equation}
\|\mb u_\e(t) - \bar {\mb v}(t) - \ti{\mb w}_0(t/\e)\|_{\mb X} \leq \e C\|\mathring{\mb u}\|_{\mb W^2_1(I)}
\label{finest}
\end{equation}
uniformly on $[0,T]$.
Furthermore, if $\mathring{\mb u}\in \mb X$, then we have
\begin{equation}
 \lim\limits_{\e\to 0}\|\mb u_\e(t) - \bar{\mb v}(t) - \ti{\mb w}_0(t/\e)\|_{\mb X} =0
 \label{weak}
 \end{equation}
 uniformly on $[0,T]$.
\label{mainth1}
\end{theorem}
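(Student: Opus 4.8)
The plan is to run the compressed-expansion (``diffusion approximation'') argument of Ref.~\cite{BaLabook}: from the formal expansion of Subsection~\ref{ssFE} I build an approximate solution of (\ref{system1'}) whose residuals — in the equation, in the boundary conditions and in the initial datum — are all $O(\e)$ in $\mb X$, and then I convert this into the bound (\ref{finest}) for $\mb u_\e$ itself via the variation of constants formula and the $\e$-uniform semigroup estimate (\ref{type}). First I set
\[
\mb u^\e_{app}(x,t):=\bar{\mb v}(t)+\e\,\bar{\mb w}_1(x,t)+\ti{\mb w}_0(x,t/\e),
\]
with $\bar{\mb v},\bar{\mb w}_1,\ti{\mb w}_0$ as in (\ref{barv}), (\ref{w1sol}) and (\ref{F-solution}). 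Since $\p_x\ti{\mb w}_0(0,\cdot)=\p_x\ti{\mb w}_0(1,\cdot)=0$ and, by (\ref{w1}), $\p_x\bar{\mb w}_1(0)=\mbb K^0_+\bar{\mb v}$, $\p_x\bar{\mb w}_1(1)=\mbb K^1_+\bar{\mb v}$, the $O(1)$ and $O(\e)$ parts of the boundary relations for $\mb u^\e_{app}$ cancel, and its boundary defect equals $-(\bal_\e(t),\bbet_\e(t))$ with $\bal_\e,\bbet_\e=O(\e)$, the $\e$-order part of $(\bal_\e,\bbet_\e)$ involving only the (bounded, exponentially decaying in $t/\e$) traces $\ti{\mb w}_0(0,t/\e),\ti{\mb w}_0(1,t/\e)$ and the $\e^2$-order part only the traces of $\bar{\mb w}_1$. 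I remove this defect with the elementary lifting
\[
\mb r_\e(x,t):=-x(1-x)\bigl((\bal_\e(t)+\bbet_\e(t))x-\bal_\e(t)\bigr)
\]
of (\ref{lftth}): because $\mb r_\e$ vanishes at $x=0,1$ (so $\Phi\mb r_\e=0$), $\mb u^\e_{app}+\mb r_\e$ satisfies the \emph{exact} boundary conditions $L(\mb u^\e_{app}+\mb r_\e)=\e\Phi(\mb u^\e_{app}+\mb r_\e)$, while $\|\mb r_\e(\cdot,t)\|_{\mb X}=O(\e)$ uniformly on $[0,T]$.

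Next I estimate the remaining residuals. As $\ti{\mb w}_0$ solves (\ref{Fourier}), $\p_t\ti{\mb w}_0(\cdot,t/\e)=\e^{-1}\p_{xx}\ti{\mb w}_0(\cdot,t/\e)=\mathsf A_\e\ti{\mb w}_0(\cdot,t/\e)$, so the initial layer is residual-free in the interior, and by (\ref{w1}), $\p_{xx}\bar{\mb w}_1=\mbb K\bar{\mb v}=\p_t\bar{\mb v}$, whence $R_\e:=\p_t\mb u^\e_{app}-\mathsf A_\e\mb u^\e_{app}=\e\,\p_t\bar{\mb w}_1=O(\e)$ in $\mb X$ uniformly on $[0,T]$. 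The one subtle term is $\p_t\mb r_\e-\mathsf A_\e\mb r_\e$: here $\mathsf A_\e\mb r_\e=\e^{-1}\p_{xx}\mb r_\e$ with $\p_{xx}\mb r_\e$ affine in $x$ and coefficients $O(\bal_\e+\bbet_\e)$, so the $\e$-order part of $(\bal_\e,\bbet_\e)$ produces a term which is only $O(1)$ pointwise in $t$ but which is $O(1)$ times an exponentially decaying function of $t/\e$, so that $\int_0^T\|(\p_s\mb r_\e-\mathsf A_\e\mb r_\e)(\cdot,s)\|_{\mb X}\,ds=O(\e)$. Finally, since $\bar{\mb v}(0)+\ti{\mb w}_0(\cdot,0)=\mc P\mathring{\mb u}+(\mathring{\mb u}-\mc P\mathring{\mb u})=\mathring{\mb u}$, the initial residual is $\mb u^\e_{app}(\cdot,0)+\mb r_\e(\cdot,0)-\mathring{\mb u}=\e\bar{\mb w}_1(\cdot,0)+\mb r_\e(\cdot,0)=O(\e)$ in $\mb X$. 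All constants above are controlled by $\|\mathring{\mb u}\|_{\mb W^2_1(I)}$, and this is the one place the $\mb W^2_1(I)$-hypothesis is consumed: two integrations by parts in (\ref{an}) together with the one-dimensional embedding $\mb W^2_1(I)\hookrightarrow\mb C^1(I)$ give $\|\mb a_n\|\le Cn^{-2}\|\mathring{\mb u}\|_{\mb W^2_1(I)}$, so that (\ref{F-solution}) and its $x$-derivative converge in $\mb C(I)$ uniformly in $\tau\ge0$, $\sup_{\tau\ge0}\|\ti{\mb w}_0(\cdot,\tau)\|_{\mb C^1(I)}\le C\|\mathring{\mb u}\|_{\mb W^2_1(I)}$ (so the traces above are legitimate also when $\mb X=\mb L_1(I)$) and $\int_0^\infty\|\p_\tau\ti{\mb w}_0(\cdot,\tau)\|_{\mb X}\,d\tau\le C\|\mathring{\mb u}\|_{\mb W^2_1(I)}$ (using $\int_0^\infty|\lambda_n|e^{\lambda_n\tau}\,d\tau=1$).

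With $\mb e_\e:=\mb u_\e-\mb u^\e_{app}-\mb r_\e$ one then has, in the mild sense, $\p_t\mb e_\e=\mathsf A_\e\mb e_\e-R_\e-(\p_t\mb r_\e-\mathsf A_\e\mb r_\e)$, $L\mb e_\e=\e\Phi\mb e_\e$, $\|\mb e_\e(0)\|_{\mb X}=O(\e)$, so by variation of constants
\[
\mb e_\e(t)=e^{t\mb A_{\e,\Phi}}\mb e_\e(0)-\int_0^t e^{(t-s)\mb A_{\e,\Phi}}\bigl(R_\e+\p_s\mb r_\e-\mathsf A_\e\mb r_\e\bigr)(\cdot,s)\,ds,
\]
and (\ref{type}) together with the estimates above give $\|\mb e_\e(t)\|_{\mb X}\le\e C_T\|\mathring{\mb u}\|_{\mb W^2_1(I)}$ on $[0,T]$; since $\|\e\bar{\mb w}_1(\cdot,t)\|_{\mb X}+\|\mb r_\e(\cdot,t)\|_{\mb X}=O(\e)$ too, this is (\ref{finest}). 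For (\ref{weak}), when only $\mathring{\mb u}\in\mb X$, I argue by density of $\mb W^2_1(I)$ in $\mb X$: the maps $\mathring{\mb u}\mapsto\mb u_\e(t)$, $\mathring{\mb u}\mapsto\bar{\mb v}(t)$, $\mathring{\mb u}\mapsto\ti{\mb w}_0(\cdot,t/\e)$ are all linear and bounded on $\mb X$ \emph{uniformly} in $(\e,t)\in(0,\infty)\times[0,T]$ — for $\mb u_\e$ by (\ref{type}), for $\bar{\mb v}$ because it solves a finite-dimensional ODE, for $\ti{\mb w}_0$ because the Neumann heat semigroup is bounded on $\mb X$ uniformly in its time argument — so applying (\ref{finest}) to a $\mb W^2_1(I)$-approximation of $\mathring{\mb u}$ and running the standard $\e/3$ argument yields (\ref{weak}).

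\textbf{Main obstacle.} The delicate step is the treatment of the lifting term $\mb r_\e$. Enforcing the \emph{exact} boundary conditions of (\ref{system1'}) forces $\mb r_\e$ to carry, already at order $\e$, the boundary traces of the initial layer $\ti{\mb w}_0$, so that $\mathsf A_\e=\e^{-1}\p_{xx}$ applied to $\mb r_\e$ contributes a term of size $O(1)$ — not $O(\e)$ — to the residual; the $O(\e)$ conclusion is recovered only by exploiting that this term decays exponentially on the fast scale $t/\e$, so that its integral against the $\e$-uniformly bounded semigroup $e^{t\mb A_{\e,\Phi}}$ is again $O(\e)$. Making this precise is exactly what forces the quantitative Fourier estimate $\|\mb a_n\|=O(n^{-2})$ (hence the $\mb W^2_1(I)$-regularity of $\mathring{\mb u}$) and the integrability $\int_0^\infty|\lambda_n|e^{\lambda_n\tau}\,d\tau=1$ of the decaying modes.
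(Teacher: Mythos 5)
Your argument is correct and its backbone coincides with the paper's: the formal expansion of Subsection~\ref{ssFE}, the cubic lifting (\ref{lftth}) to restore the exact boundary conditions, the Duhamel formula combined with the $\e$-uniform bound (\ref{type}), and, as the decisive quantitative input, the summability $\sum_n|\mb a_n|\le C\|\mathring{\mb u}\|_{\mb W^2_1(I)}$ which converts the $O(1)$ contribution of $\mathsf A_\e\mb r_\e$ and of $\p_t$ of the layer traces into an $O(\e)$ bound after integration in $t$ over the fast scale. The genuine difference is that you dispense with the regularization step: the paper first replaces $\mathring{\mb w}$ by $\mathring{\mb w}_\delta$ satisfying the Neumann conditions (Lemma~\ref{udeltalem}), so that the integration by parts for the coefficients $\mb a_{n,\delta}$ produces no boundary terms and all pieces of the error equation are classical, and then removes $\delta$ at the end by taking $\delta\lesssim\e$; you instead observe that for $\mathring{\mb u}\in\mb W^2_1(I)$ the boundary terms arising in the second integration by parts are themselves $O(n^{-2})$ by the embedding $\mb W^2_1(I)\hookrightarrow\mb C^1(I)$, so $|\mb a_n|\le Cn^{-2}\|\mathring{\mb u}\|_{\mb W^2_1(I)}$ holds without modifying the datum, and you lift the approximation rather than the error (trivially equivalent). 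This buys a leaner proof (no $\delta$-bookkeeping, and the initial residual is exactly $-\e\bar{\mb w}_1(0)-\mb r_\e(0)$), at the price of having to justify that the error, which is only a classical solution for $t>0$ because $\mathring{\mb w}\notin D(\mb A_0)$, nevertheless satisfies the variation-of-constants formula; this is the standard argument (differentiate $s\mapsto e^{(t-s)\mb A_{\e,\Phi}}\mb e_\e(s)$ on $]0,t[$, use continuity of $\mb e_\e$ at $0$ and $L^1$-in-time integrability of the source) and you should state it, since it is exactly the point the paper's regularization was designed to smooth over. One cosmetic remark: your claim that the termwise $x$-differentiated series (\ref{F-solution}) converges in $\mb C(I)$ uniformly down to $\tau=0$ does not follow from $|\mb a_n|=O(n^{-2})$ alone; the uniform bound $\sup_{\tau\ge0}\|\ti{\mb w}_0(\cdot,\tau)\|_{\mb C^1(I)}\le C\|\mathring{\mb u}\|_{\mb W^2_1(I)}$ is true (e.g.\ because $\p_x\ti{\mb w}_0$ solves the Dirichlet heat problem and obeys the maximum principle), but in fact your proof never uses it: only the endpoint values $\ti{\mb w}_0(0,\tau),\ti{\mb w}_0(1,\tau)$ and the $L^1$-in-$t$ control of their $\tau$-derivatives are needed, and these follow from $\sum_n|\mb a_n|<\infty$.
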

\subsubsection{Proof of Theorem \ref{mainth1}}
The first problem we encounter is that since $\mathring{\bf w}$ does not satisfy the boundary conditions of (\ref{Fourier}) and thus it is not in the domain of the generator, $\ti{\mb w}_0$ is not a classical solution to (\ref{Fourier}) up to $t=0$. To remedy this, we need the following lemma.
\begin{lem}
Let $\mb u \in \mb W^2_1(I)$ and $\mb X =\mb L_1(I)$ or $\mb X = \mb C(I)$. Then for any $\delta>0$ there is $\mb u_\delta \in \mb W^2_1(I)$ satisfying $\p_x\mb u_\delta(0) = \p_x\mb u_\delta(1) =0$ and such that
\begin{eqnarray}
\|\mb u -\mb u_\delta\|_{\mb X} &\leq& C_1\delta \|\mb u\|_{\mb W^2_1(I)},\label{u1}\\
\|\p_x\mb u_\delta\|_{\mb X} &\leq& C_2 \|\mb u\|_{\mb W^2_1(I)},\label{u2}\\
\|\p_{xx}\mb u_\delta\|_{\mb L_1(I)} &\leq& C_3 \|\mb u\|_{\mb W^2_1(I)},\label{u3}\\
 \sum_{n = 1}^{\infty}|\mathbf{a}_{n,\delta}| &\leq& C_4 \|\mb u\|_{\mb W^2_1(I)},
 \label{udelta}
 \end{eqnarray}
 where $C_i, i=1,2,3$ are independent of $\delta$ and $\mb a_{n,\delta}$ are coefficients (\ref{an})  for $\mb u_\delta$.
\label{udeltalem}
\end{lem}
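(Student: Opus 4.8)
The plan is to construct $\mb u_\delta$ by subtracting from $\mb u$ an explicit correction that is localized in two boundary strips of width $\delta$ and carries exactly the ``wrong'' derivative traces of $\mb u$ at the endpoints, scaled so as to be $O(\delta)$ in $\mb X$ while keeping its first and second derivatives bounded uniformly in $\delta$. The only genuine difficulty is to make all four estimates hold \emph{simultaneously} with $\delta$-independent constants, and this is what dictates the precise scaling of the correction.

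First I would note that $\mb u\in\mb W^2_1(I)$ gives $\p_x\mb u\in\mb W^1_1(I)\hookrightarrow\mb C(I)$, so $\mb a:=\p_x\mb u(0)$ and $\mb b:=\p_x\mb u(1)$ are well defined and $|\mb a|+|\mb b|\le\|\p_x\mb u\|_\infty\le C\|\mb u\|_{\mb W^2_1(I)}$. Fix once and for all a profile $g\in C^\infty([0,1])$ with $g(0)=g(1)=0$, $g'(0)=1$, $g'(1)=0$ (for instance $g(s)=s(1-s)^2$), and for $0<\delta<1/2$ set $\mb c_\delta(x)=\delta\,\mb a\,g(x/\delta)$ on $[0,\delta]$, $\mb c_\delta(x)=-\delta\,\mb b\,g((1-x)/\delta)$ on $[1-\delta,1]$, and $\mb c_\delta\equiv0$ on $[\delta,1-\delta]$. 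The endpoint conditions on $g$ make $\mb c_\delta$ and $\p_x\mb c_\delta$ continuous across $x=\delta$ and $x=1-\delta$, so $\mb c_\delta\in\mb W^2_1(I)$, while $\p_x\mb c_\delta(0)=\mb a\,g'(0)=\mb a$ and $\p_x\mb c_\delta(1)=\mb b$. Hence $\mb u_\delta:=\mb u-\mb c_\delta\in\mb W^2_1(I)$ satisfies $\p_x\mb u_\delta(0)=\p_x\mb u_\delta(1)=0$, as required. A direct change of variables then gives $\|\mb c_\delta\|_{\mb X}\le\delta(|\mb a|+|\mb b|)\max(1,\|g\|_\infty)$, $\|\p_x\mb c_\delta\|_{\mb X}\le(|\mb a|+|\mb b|)\|g'\|_\infty$ and $\|\p_{xx}\mb c_\delta\|_{\mb L_1(I)}\le(|\mb a|+|\mb b|)\|g''\|_{L_1(0,1)}$; the last two are $\delta$-independent because the $\delta$ prefactor, the two differentiations and the integration over a strip of length $\delta$ exactly balance. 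Combining these with $\|\p_x\mb u\|_{\mb X}\le C\|\mb u\|_{\mb W^2_1(I)}$, $\|\p_{xx}\mb u\|_{\mb L_1(I)}\le\|\mb u\|_{\mb W^2_1(I)}$ and the bound on $|\mb a|+|\mb b|$, the triangle inequality yields (\ref{u1})--(\ref{u3}) with $C_1,C_2,C_3$ depending only on the fixed profile $g$.

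For (\ref{udelta}) I would compute $\mb a_{n,\delta}=2\cl{0}{1}\mb u_\delta(x)\cos(n\pi x)\,dx$ (the $\mc P\mb u_\delta$ component contributes nothing for $n\ge1$) by integrating by parts twice: the boundary term after the first integration by parts vanishes because $\sin(n\pi x)$ vanishes at $x=0,1$, and the one after the second vanishes because $\p_x\mb u_\delta(0)=\p_x\mb u_\delta(1)=0$ by construction. This leaves $\mb a_{n,\delta}=-2(n\pi)^{-2}\cl{0}{1}\p_{xx}\mb u_\delta(x)\cos(n\pi x)\,dx$, whence $|\mb a_{n,\delta}|\le2(n\pi)^{-2}\|\p_{xx}\mb u_\delta\|_{\mb L_1(I)}\le2C_3(n\pi)^{-2}\|\mb u\|_{\mb W^2_1(I)}$, and summing with $\sum_{n\ge1}n^{-2}=\pi^2/6$ gives (\ref{udelta}) with $C_4=C_3/3$.

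I expect the main obstacle to be, as indicated, the joint bookkeeping rather than any single estimate: demanding that $\|\p_x\mb u_\delta\|_{\mb X}$ stay bounded uniformly in $\delta$ is precisely what forces the scaling $\mb c_\delta\sim\delta\,g(\cdot/\delta)$, and one must then check that this scaling neither spoils (\ref{u1}) (in fact it improves it, supplying the gain $\delta$) nor (\ref{u3}) (the $\delta^{-1}$ produced by the second derivative is absorbed by the $\delta$ prefactor together with the length-$\delta$ integration). Finally, the restriction $\delta<1/2$, needed so that the two boundary bumps do not overlap, costs nothing: for $\delta\ge1/2$ one simply uses the $\delta=1/4$ construction and absorbs the extra constant into $C_1$.
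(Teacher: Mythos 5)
Your proof is correct and takes essentially the same approach as the paper: a modification of $\mb u$ confined to boundary strips of width $\delta$, scaled exactly so that the correction is $O(\delta)$ in $\mb X$ while its first and second derivatives stay bounded uniformly in $\delta$, followed by the identical double integration by parts bounding $|\mb a_{n,\delta}|$ by $\frac{2}{\pi^2 n^2}\|\p_{xx}\mb u_\delta\|_{\mb L_1(I)}$. The only (cosmetic) difference is that the paper replaces $\mb u$ on $[0,\delta]$ and $[1-\delta,1]$ by explicit quadratics matching $\mb u$ and $\p_x\mb u$ at $x=\delta,1-\delta$, whereas you subtract a rescaled smooth bump carrying the traces $\p_x\mb u(0)$, $\p_x\mb u(1)$.
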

\begin{proof}
Let $\mb u \in \mb W^2_1(I)$. Then $\p_x\mb u$ absolutely continuous and, for any $x\in I,$ we have $|\mb u(x)|\leq C\|\mb u\|_{\mb W^2_1(I)}$ and
$|\p_x\mb u(x)|\leq C'\|\mb u\|_{\mb W^2_1(I)}.$ For $0<\delta<1/2,$ we define
\begin{equation}
\mb u_\delta(x) = \left\{\begin{array}{lcl}\mb u(x)&\mathrm{for}& x \in [\delta, 1-\delta],\\
\frac{\p_x\mb u(\delta)}{2\delta}(x^2-\delta^2) + \mb u(\delta) &\mathrm{for}& x \in [0,\delta[,\\
\frac{\p_x\mb u(1-\delta)}{2\delta}(\delta^2-(1-x)^2) + \mb u(1-\delta) &\mathrm{for}& x \in ]1-\delta,1].
\end{array}
\right.
\end{equation}
We see that
\begin{equation}
\p_x\mb u_\delta(x) = \left\{\begin{array}{lcl}\p_x\mb u(x)&\mathrm{for}& x \in [\delta, 1-\delta],\\
\frac{\p_x\mb u(\delta)}{\delta}x  &\mathrm{for}& x \in [0,\delta[,\\
\frac{\p_x\mb u(1-\delta)}{\delta}(1-x)  &\mathrm{for}& x \in ]1-\delta,1]
\end{array}
\right.\label{pxu}
\end{equation}
and, since $\p_x\mb u_\delta$ is continuous at $x=\delta, 1-\delta$,
\begin{equation}
\p_{xx}\mb u_\delta(x) = \left\{\begin{array}{lcl}\p_{xx}\mb u(x)&\mathrm{for}& x \in [\delta, 1-\delta],\\
\frac{\p_x\mb u(\delta)}{\delta} &\mathrm{for}& x \in [0,\delta[,\\
\frac{-\p_x\mb u(1-\delta)}{\delta}  &\mathrm{for}& x \in ]1-\delta,1]
\end{array}
\right.\label{pxxu}
\end{equation}
hence $\mb u_\delta\in \mb W^2_1(I)$.
Now, we have
\begin{eqnarray*}
\|\mb u -\mb u_\delta\|_{\mb L_1(I)} &\leq&  \cl{0}{\delta}|\mb u(x)|dx + \frac{\p_x\mb u(\delta)}{2\delta}\cl{0}{\delta} |x^2-\delta^2|dx
+ \mb u(\delta)\cl{0}{\delta} dx\\
&&+ \frac{\p_x\mb u(1-\delta)}{2\delta}\!\!\cl{1-\delta}{1} |(1-x)^2-\delta^2|dx
+ \mb u(1-\delta)\!\!\cl{1-\delta}{1} dx \\
&&\leq C_1\delta \|\mb u\|_{\mb W^2_1(I)}
\end{eqnarray*}
and
\begin{eqnarray*}
\|\mb u -\mb u_\delta\|_{\mb C(I)} &\leq&  \sup\limits_{x\in [0,\delta]}|\mb u(x)-\mb u(\delta)| + \sup\limits_{x\in [0,\delta]}\left|\frac{\p_x\mb u(\delta)}{2\delta}(x^2-\delta^2)\right|\\
 &&+\sup\limits_{x\in [1-\delta,1]}|\mb u(x)-\mb u(1-\delta)| \\
 &&+ \sup\limits_{x\in [1-\delta,1]}\left|\frac{\p_x\mb u(1-\delta)}{2\delta}((1-x)^2-\delta^2)\right|\leq C_1\delta\|\mb u\|_{W^2_1(I)},
\end{eqnarray*}
where we used $|\mb u(x)-\mb u(y)|\leq \sup_{\xi \in I}|\p_x \mb u(\xi)||x-y| \leq \|\mb u\|_{W^2_1(I)}|x-y|$, $x,y \in I$. This gives (\ref{u1}).
Analogous calculations, using (\ref{pxu}) and (\ref{pxxu}) give  (\ref{u2}) and (\ref{u3}).

Finally, integrating twice by parts and using the boundary conditions yields
\begin{eqnarray*}
\frac{1}{2}\mb a_{n,\delta} 
&=& \frac{1}{\pi^2 n^2}\p_x\mb u_{\delta}(x)\cos n\pi x|^{x=1}_{x=0} - \frac{1}{\pi^2 n^2}\cl{0}{1} \p_{xx}\mb u_\delta(x)\cos n\pi x\, dx.
\end{eqnarray*}
Hence
$$
|\mb a_{n,\delta}| \leq  \frac{2}{\pi^2 n^2}\cl{0}{1} |\p_{xx}\mb u_\delta(x)|\, dx
 $$
 and thus (\ref{udelta}) holds.

\end{proof}

Further, we observe that if $\mb u_\delta$ is such an approximation of $\mb u,$ then $\mb w = \mb u- \mc P \mb u$, where $\mc P$ is defined by (\ref{projection}), can be approximated in the same way by $\mb w_\delta = \mb u_\delta - \mc P\mb u_\delta$. Indeed, since $\mc P\mb u_\delta$ is a constant, $\mb w_\delta$ is in $\mb W^2_1(I)$, $\p_x \mb w_\delta = \p_x\mb u_\delta$ so that the boundary conditions are satisfied and
\begin{equation}
\mb w - \mb w_\delta = \mb u-\mb u_\delta - \cl{0}{1}(\mb u(x)-\mb u_\delta(x))dx
\label{wdelta}
\end{equation}
so that $\mb w_\delta$ approximates $\mb w$ in each $\mb X$:
$$
\|\mb w - \mb w_\delta\|_{\mb X} \leq \|\mb u-\mb u_\delta\|_{\mb X} + \|\mb u-\mb u_\delta\|_{\mb L_1(I)} \leq 2C_1\delta\|\mb u\|_{\mb W^2_1(I)}.
$$
In what follows, we shall use the initial layer $\ti{\mb w}_{0,\delta}$ which is the solution to (\ref{Fourier}) with the initial condition $\mathring{\mb w}_\delta$ defined as $\mathring {\mb w}_\delta = \mathring{\mb u}_\delta - \mc P\mathring{\mb u}_\delta$.
We approximate  $(\mathbf{v}_\e, \mathbf{w}_\e)$ by $(\bar{\mathbf{v}}, \epsilon\mathbf{w}_{1} +\ti {\mb w}_{0,\delta})$ and find that the  error $
\mb {e_v} = \mb v_\e -\bar{\mb v}$, $\mb {e_w} = \mb w_\e - \e \bar{\mb w}_1 -\ti{ \mb w}_{0,\delta}$ formally satisfies, by (\ref{barv}), (\ref{w1}) and and (\ref{Fourier}), the equations
\begin{eqnarray}
\p_t \mathbf{e}_{\mathbf{v}}(t) &=& \mbb K\mathbf{e}_{\mathbf{v}}(t) + \mbb K^1_- (\mathbf{e}_{\mathbf{w}}(1, t) + \epsilon \bar{\mathbf{w}}_{1}(1, t) + \ti{\mb w}_{0,\delta}(1,\tau)) \nn\\&& + \mbb K^0_-(\mathbf{e}_{\mathbf{w}}(0, t)+ \epsilon \mathbf{w}_{1}(0, t)+\ti{\mb w}_{0,\delta}(0,\tau)),\nn\\
\partial_t \mathbf{e}_{\mathbf{w}}(x,t) &=&  \frac{1}{\epsilon}\p_{xx}\mathbf{e}_{\mathbf{w}}(x,t) -\mbb K\mathbf{e}_{\mathbf{v}}(t)\nn\\&&   -\mbb K_-^1(\mathbf{e}_{\mathbf{w}}(1,t)+ \epsilon \mathbf{w}_{1}(1,t)+\ti {\mb w}_{0,\delta}(1,\tau)) \nn\\&&-\mbb K_-^0(\mathbf{e}_{\mathbf{w}}(0,t) + \epsilon \mathbf{w}_{1}(0,t)+\ti {\mb w}_{0,\delta}(0,\tau)) - \epsilon \partial_t\bar{\mathbf{w}}_1,\nn\\
\mb{e_v}(0) &=& \mb v_\e(x,0)-\bar{\mb v}(x,0) =0,\nn\\
\mathbf{e}_{\mathbf{w}}(x, 0) &=&- \epsilon \bar{\mathbf{w}}_1(x,0) +\mathring{\mb w}(x)- \mathring{\mb w}_\delta(0).
\label{eveq}
\end{eqnarray}
For the boundary conditions,  using (\ref{wx1}), (\ref{w1}) and (\ref{Fourier}) we have
\bdm
\bad
\partial_{x}\mathbf{e}_{\mathbf{w}}(1,t) &= \partial_{x}\mathbf{w}_\e(1, t) - \epsilon \partial_{x}\bar{\mathbf{w}}_1(1, t)-\p_x\ti{\mb w}_{0,,\delta}(1,\tau)\\
& = \epsilon \left(\mbb K^1_+\mathbf{e}_{\mathbf{v}}(t)+ \mbb K^{11}\mathbf{e}_{\mathbf{w}}(1, t) +\mbb K^{10}\mathbf{e}_{\mathbf{w}}(0, t)\right) \\&\phantom{x}+
\e (\mbb K^{11}\tilde{\mathbf{w}}_{0,\delta}(1, \tau) +\mbb K^{10}\tilde{\mathbf{w}}_{0,\delta}(0, \tau)) \\
&\phantom{x}+\e^2 (\mbb K^{11}\bar{\mathbf{w}}_1(1, \tau) +\mbb K^{10}\bar{\mathbf{w}}_1(0, \tau)).
\ead
\edm
Similarly, using (\ref{wx0}) and again (\ref{Fourier}), we obtain
\bdm
\bad
\partial_{x}\mathbf{e}_{\mathbf{w}}(0,t) &= \partial_{x}\mathbf{w}_\e(0, t) - \epsilon \partial_{x}\bar{\mathbf{w}}_1(0, t)-\p_x\ti{\mb w}_{0,\delta}(x,\tau)\\
& = \epsilon \left(\mbb K^0_+\mathbf{e}_{\mathbf{v}}(t)+ \mbb K^{01}\mathbf{e}_{\mathbf{w}}(1, t) +\mbb K^{00}\mathbf{e}_{\mathbf{w}}(0, t)\right)\\
&\phantom{x} +
\e (\mbb K^{01}\tilde{\mathbf{w}}_{0,\delta}(1, \tau) +\mbb K^{00}\tilde{\mathbf{w}}_{0,\delta}(0, \tau))\\& \phantom{x} +\e^2 (\mbb K^{01}\bar{\mathbf{w}}_1(1, \tau) +\mbb K^{00}\bar{\mathbf{w}}_1(0, \tau)).
\ead
\edm
To proceed, we observe that, since the semigroup  \sem{\mb A_{\e,\Phi}} is analytic, for a given $\mathring{\mb u}\in \mb X$ the function $t\to \mb u_\e(t) = e^{t\mb A_{\e,\Phi}}\mathring{\mb u}$ is a classical solution of the problem in the sense that for any $t>0$ it is infinitely (classically) differentiable and the system of differential equations and the boundary conditions are pointwise satisfied.  Similarly, all other terms of the asymptotic expansion are sufficiently regular for all terms in the above equations to be well defined. Hence, denoting $\mb E = \mb e_{\mb v}+ \mb e_{\mb w}$, adding the equations for $\mb e_\mb v$ and $\mb e_\mb w$ and using the regularity of each term, we see that $\mb E$ is the classical solution of the problem
\begin{eqnarray}
\p_t \mb E &=& \frac{1}{\e} \p_{xx} \mb E - \e\p_t\bar{\mb w}_1,\nn\\
\p_x\mb E|_{x=0} &=& \e \mbb K^{01}\mb E|_{x=1}+ \e \mbb K^{00}\mb E|_{x=0} + \e (\bPsi_{0,\delta} +\e\bPhi_0),\nn\\
\p_x\mb E|_{x=1} &=& \e \mbb K^{11}\mb E|_{x=1}+ \e \mbb K^{10}\mb E|_{x=0} + \e (\bPsi_{1,\delta} +\e\bPhi_1),\nn\\
\mb E( 0) &=&- \epsilon \bar{\mathbf{w}}_1(0) +\mathring{\mb w}- \mathring{\mb w}_\delta,\label{53}
\end{eqnarray}
where
\begin{eqnarray*}
\bPsi_{0,\delta} +\e\bPhi_0 &=&  \mbb K^{01}\tilde{\mathbf{w}}_{0,\delta}|_{x=1} +\mbb K^{00}\tilde{\mathbf{w}}_{0,\delta}|_{x=0}+\e (\mbb K^{01}\bar{\mathbf{w}}_1|_{x=1} +\mbb K^{00}\bar{\mathbf{w}}_1|_{x=0}),\nn\\
\bPsi_{1,\delta} +\e\bPhi_1 &=& \mbb K^{11}\tilde{\mathbf{w}}_{0,\delta}|_{x=1} +\mbb K^{10}\tilde{\mathbf{w}}_{0,\delta}|_{x=0} +\e (\mbb K^{11}\bar{\mathbf{w}}_1|_{x=1} +\mbb K^{10}\bar{\mathbf{w}}_1|_{x=0}).
\end{eqnarray*}
 Next, by (\ref{sys3}), we see that the substitution \begin{eqnarray*}\mb F(x,t) &=&\mb E(x,t) +\e x(1-x)((\bPsi_{0,\delta}(\tau) +\bPsi_{1,\delta}(\tau) +\e(\bPhi_0(t)+\bPhi_1(t)))x \\
 &&-\bPsi_{\delta}(\tau)-\e\bPhi_0(t))\end{eqnarray*} reduces (\ref{53}) to
\begin{eqnarray}
\p_t \mb F &=& \frac{1}{\e} \p_{xx} \mb F +  \e \bar {\mb G} + \tilde{
\mb G}_\delta,\nn\\
\p_x\mb F|_{x=0} &=& \e \mbb K^{01}\mb F|_{x=1}+ \e \mbb K^{00}\mb F|_{x=0},  \nn\\
\p_x\mb F|_{x=1} &=& \e \mbb K^{11}\mb F|_{x=1}+ \e \mbb K^{10}\mb F|_{x=0},\nn\\
\mb F(x, 0) &=& \mathring{\mb w}(x)- \mathring{\mb w}_\delta(x) +\epsilon ( -\bar{\mathbf{w}}_1(x,0)+ \mb H_\delta(x,0)),\label{Feq}
\end{eqnarray}
where, since $\bar {\mb w}_1(0,0) = -\left(\frac{1}{3}\mbb  K^0_++\frac{1}{6}\mbb K^1_+\right)\!\!\init{\mb v}$ and
$\bar {\mb w}_1(1,0) = \left(\frac{1}{3}\mbb  K^1_++\frac{1}{6}\mbb  K^0_+\right)\!\!\init{\mb v}$, we have
\begin{eqnarray*}
\bar {\mb G}(x,t) &=&-\p_t\bar{\mb w}_1(x,t)  +2  (\bPhi_0(t)+\bPhi_1(t))x - 2\bPhi_0(t)-\bPhi_1(t),\\
\ti {\mb G}_\delta(x,\tau)&=& \p_\tau\left(-x^3 (\bPsi_{0,\delta}(\tau)+\bPsi_{1,\delta}(\tau))\right.\\
 &&-\left. x^2(2 \bPsi_{0,\delta}(\tau)+\bPsi_{1,\delta}(\tau)) + x\bPsi_{0,\delta}(\tau)\right),\\
\mb H_\delta(x,0)&=& x(1-x)\left(\left(\mbb  K^{1}_+\!\!\init{\mb w}_\delta\!\!(1) + \mbb  K^{0}_+\!\!\init{\mb w}_\delta\!\!(0)\right.\right.\\
 &&\left.+\frac{\e}{3} \!\init{\mb v}\!\!\left.(\mbb K^{01}+ \mbb K^{11})\left(\mbb  K^1_++2\mbb  K^0_+\right) - (\mbb K^{00}+\mbb K^{10})\left(\mbb  K^0_++2\mbb  K^1_+\right)\phantom{\frac{\e}{3}}\!\!\!\!\!\right)x \right.\\
 && - \!\left.\left(\mbb  K^{01}\!\!\init{\mb w}_\delta\!\!(1) +\mbb  K^{00}\!\!\init{\mb w}_\delta\!\!(0)\right.\right.\\
  &&\left.\left.+\frac{\e}{3} \left (\mbb K^{01}\!\!\left(\mbb  K^1_++2\mbb  K^0_+\right) - \mbb K^{00}\!\!\left(\mbb  K^0_++2\mbb  K^1_+\right)\right)\!\init{\mb v}\right)\!\right).
\end{eqnarray*}
We observe that $\mb H_\delta$ is well defined as  for $\tau>0$
$$
\tilde{\mathbf{w}}_{0,\delta}(0,\tau) =  \sum_{n = 1}^{\infty}e^{\lambda_n\tau}\mathbf{a}_{n,\delta},\qquad
\tilde{\mathbf{w}}_{0,\delta}(1,\tau) =  \sum_{n = 1}^{\infty}(-1)^ne^{\lambda_n\tau}\mathbf{a}_{n,\delta}
$$
and, by Lemma \ref{udeltalem}, both series are uniformly convergent on $[0,T], T<\infty,$ and thus define continuous functions on $[0,T].$ Thus
the values $\tilde{\mathbf{w}}_{0,\delta}(0,0), \tilde{\mathbf{w}}_{0,\delta}(1,\tau)$ are well defined and equal to $\mathring{\mb w}_\delta(0)$ and $\mathring{\mb w}_\delta(1)$, respectively.

Similarly, we observe that for $\tau>0$
$$
\p_\tau\Psi_{0,\delta}(\tau) = \mbb K^{01}\sum_{n = 1}^{\infty}(-1)^n\la_n e^{\lambda_n\tau}\mathbf{a}_{n,\delta} +\mbb K^{00}\sum_{n = 1}^{\infty}\la_ne^{\lambda_n\tau}\mathbf{a}_{n,\delta}
$$
and, by (\ref{udelta}),
$$
\cl{0}{\infty}|\p_\tau\Psi_{0,\delta}(\tau)|dt \leq C\cl{0}{\infty}\sum\limits_{n = 1}^{\infty}\pi n e^{-\pi n\frac{t}{\e}}|\mathbf{a}_{n,\delta}|dt \leq \e C\sum\limits_{n=1}^\infty |\mathbf{a}_{n,\delta}|
\leq\e C_4 \|\mb u\|_{\mb W^2_1(I)}.
$$
In the same way
$$
\cl{0}{\infty}|\p_\tau\Psi_{1,\delta}(\tau)|dt  \leq \e C_4 \|\mb u\|_{\mb W^2_1(I)}.
$$
Then, using the mild formulation of the solution to (\ref{Feq}), we obtain on $[0,T]$
\begin{eqnarray*}
&&\|\mb F(t)\|_{\mb X} \leq \|e^{t\mb A_{\e,\Phi}}(\mathring{\mb w}-\mathring{\mb w}_\delta + \e \mb H_\delta)\|_{\mb X}\\
&&+ \|\cl{0}{t}e^{(t-s)\mb A_{\e,\Phi}}(\e \bar {\mb G}(s) + \ti{\mb G}(s/\e))ds\|_{\mb X} \\
&\leq &\!M_\Phi e^{\omega_\Phi T}\!\!\left(\!\|\mathring{\mb w}-\mathring{\mb w}_\delta\|_{\mb X}\!\! +\e\|\mb H_\delta\|_{\mb X}\!\! +\! \e\!\! \cl{0}{T} \!\|\bar{ \mb G}(s)\|_{\mb X}ds + \! \cl{0}{T}\!\|\ti{ \mb G}_\delta(s/\e)\|_{\mb X}ds\!\right)\\
&\leq& M_\Phi e^{\omega_\Phi T} \left(\|\mathring{\mb w}-\mathring{\mb w}_\delta\|_{\mb X}+ \e C'\|\mathring{\mb u}\|_{\mb W^2_1(I)}\right)
\end{eqnarray*}
for some constant $C'$. Using the fact that $C'$ is independent of $\delta$ and the smallness of the auxiliary terms, we see  that for any $T<\infty$ there is $C$ such that
$$
 \|\mb u_\e(t) - \bar{\mb v}(t) - \ti{\mb w}_0(t/\e)\|_{\mb X} = \e C\|\mathring{\mb u}\|_{\mb W^2_1(I)
}$$
uniformly on $[0,T]$, where  $\mathring{\mb u} \in W^2_1(I)$ is arbitrary. Using the density of $\mb W^2_1(I)$ in $\mb X,$ uniform boundedness of \sem{\mb A_{\e,\Phi}} on $[0,T]$ and corollary to the Banach-Steinhaus theorem ($3-\e$ lemma), we also obtain (\ref{weak})  for any $\mathring{\mb u} \in \mb X$.

\begin{exa}\label{ex33}
It is interesting to consider to which extent the properties of the original system are inherited by the limit one.  For this we return to the  neurotransmitter model of Aristazabal and Glavinovi\v{c}, described in Example \ref{AG}. This macro-model was enhanced to a micro-model along the lines described in the Introduction in Refs~ \cite{BoMo,AB12}, where the authors corrected the first attempt of such a construction developed in Ref.~ \cite{BK}. In the micro-model it is assumed that the pools with vesicles occupy some physical space and the vesicles in each pool move according to a diffusion process and migrate between the pools by crossing semipermeable membranes separating the pools.
 In Ref.~ \cite{BoMo}, the authors interpreted the pools as the edges of a finite graph connected by the nodes at which the exchange of vesicles takes place. In this particular model the graph is linear, however, in Ref.~ \cite{AB12} the author generalized the model so that it was posed on
  a finite graph without loops $\mc G = (\mc V, \mc E)$ with, say $n$ vertices and $m$ edges.  On each edge there is a substance with density $u_j, j\in \mc M,$ which diffuses along this edge and can also enter the adjacent edges across the vertices that join them according to a version of the Fick law.   To write down its analytical form, first we note that, since diffusion does not have a preferred direction, we can assign the tail, or the left endpoint, (that is, 0)  and  the head, or the right endpoint (that is, 1)  to the endpoints of the edge in an arbitrary way. Let $l_j$ and $r_j$ be the rates at which the substance leaves $e_j$ through, respectively, the left and the right endpoints and  $l_{jk}$ and $r_{jk}$ be the rates of at which it subsequently enters the edge $e_k$.
Then the Fick laws at, respectively,  the head and the tail of $e_i$  are given by
\begin{equation}
-\p_x u_{i}(1) = r_{i}u_{i}(1) - \sum_{j\neq i}{r_{ij}u_{j}(v)},\quad
\p_x u_{i}(0) =l_iu_i(0) - \sum_{j\neq i}{l_{ij}u_{j}(v)},
\label{trans3}
\end{equation}
 where we used $u_j(v)$ as $v$ may be either the tail or the head of  $e_j$. If there are no edges incident to $e_i$, then respective coefficients $r_{ij}$ or $l_{ij}$ are equal to 0.

It is clear that if $r_{ij}\neq 0,$ then $l_{ij} =0$ and if $l_{ij}\neq 0,$ then $r_{ij} =0$ and thus we can define an $m\times m$ matrix $\mbb A$ by setting $a_{ij}=1$ if either $r_{ij}=1$ or $l_{ij} =1$ and zero otherwise. Then $\mbb A$ is the adjacency matrix of the line graph $L(\mc G)$ of $\mc G$. Such a matrix is, however, not easy to use and we see that introducing, for any $i,j \in \mc M,$
\begin{eqnarray}
k^{00}_{ij} &=& -l_{ij} \quad \mathrm{if}\; v = 0,\qquad    k^{01}_{ij} = -l_{ij}\quad  \mathrm{if}\; v=1, \qquad k^{00}_{ii} = l_{i},\nn\\
     k^{10}_{ij} &=& r_{ij}\quad \mathrm{if}\; v = 0,\qquad k^{11}_{ij} = r_{ij} \quad \mathrm{if}\; v=1, \qquad k^{11}_{ii} = -r_{i},
      \label{ktor}
      \end{eqnarray}
      where $v$ is either the tail or the head of the edge under consideration, the boundary conditions can be written as in (\ref{system1'}).
 In Ref.~ \cite{AB12} the author considered the expected position of the particle on the graph which  led to a Feller process and thus the problem was posed in the space of continuous functions. On the other hand, in Ref.~ \cite{AG13} the probability density of the process was considered and this resulted in a Markov process posed in a space of integrable functions. The corresponding diffusion problems are adjoint to each other, as in (\ref{phistar}) and (\ref{resnorm}).
 Certainly, this interpretation requires in both cases the solution to be   a scalar function and thus we identify $\mb C(I)$ (resp. $\mb L_1(I)$) with the space $C(\mb I)$ (resp. $L_1(\mb I)$), where $\mb I = [0_1,1_1]\cup\ldots\cup [0_m,1_m]$; that is, instead of considering a vector function on $I$ we consider a scalar function on a disconnected compact space composed of $m$ disjoint closed intervals, see \cite{BFN2}. In particular, each edge $e_j, j\in \mc M,$ is identified with the closed interval $[0_j,1_j].$ Then  $\mb A_\Phi$ will be changed to $A_\Phi$ which is  the restriction of $$
(Au)(x) = \sum\limits_{j\in \mc M}\chi_{[0_j,1_j]}(x)\sigma_j\p_{xx}u(x)$$
to the space of $C^2$ (rep. $W^2_1$) functions on $Int\, \mb I = ]0_1,1_1[\cup\ldots\cup ]0_m,1_m[$ which, in the former case, are $C^1$ on $\mb I$ and satisfy
$$
\p_xu(0_j) = \sum\limits_{k=1}^m k^{00}_{jk}u(0_k) + \sum\limits_{k=1}^m k^{01}_{jk}u(1_k),\quad
\p_xu(1_j) = \sum\limits_{k=1}^m k^{10}_{jk}u(0_k) + \sum\limits_{k=1}^m k^{11}_{jk}u(1_k).
$$
 Denote by \sem{A_{\e,\Phi}} such a realization of \sem{\mb A_{\e,\Phi}} in $\mb X=C(\mb I)$ or $\mb X= L_1(\mb I)$. Recalling the relation between the model of Refs~ \cite{AB12,AG13} and our formulation, let $\Phi$ correspond to the matrices $\mbb K^\omega$, $\omega \in\{00,01,10,11\},$ according to (\ref{ktor}). Then, by Ref.~ \cite{BFN2},  both \sem{A_{\e,\Phi}} and  its adjoint \sem{A_{\e, \Phi^*}}, see (\ref{phistar}), are positive, in $C(\mb I)$ and $L_1(\mb I)$, respectively, for any $\epsilon$. Let us concentrate on the process in $L_1(\mb I)$.  It is clear that it is a Markov semigroup if and only if
\begin{eqnarray*}
0&=&\p_t \left(\sum\limits_{j\in \mc M}\cl{\mb I}{}u(x)dx )= \sum\limits_{j \in \mc M} (\p_xu(1_j)-\p_xu(0_j) \right)\\
&=& \sum\limits_{i=1}^m \left(-u(0_i) \sum\limits_{j=1}^m (k^{01}_{ij}+k^{00}_{ij})+ u(1_i)\sum\limits_{j=1}^m (k^{11}_{ij}+k^{01}_{ij})\right).
\end{eqnarray*}
As the end-point values are arbitrary,  \sem{A_{\e,\Phi^*}} is Markov if and only if
\begin{equation}
\sum\limits_{j=1}^m (k^{01}_{ij}+k^{00}_{ij}) =0, \qquad \sum\limits_{j=1}^m (k^{11}_{ij}+k^{01}_{ij})=0, \quad i\in \mc M,
 \label{stoch}
 \end{equation}
  which, according to (\ref{ktor}) gives $l_i = \sum_j l_{ij}$ and $r_i = \sum_j r_{ij}$, as stated in Ref.~ \cite{AB12}.

If we consider this formulation in the approximation (\ref{barv})
$$
\p_t{\bar{\mb v}} = \mbb K \bar{\mb v},\quad  \bar{\mb v}(0) = \mc P \mathring{\mb u},
$$
then $$\mbb K = -\mbb K^{10\,T}-\mbb K^{00\,T}+\mbb K^{11\,T}+\mbb K^{01\,T}.$$ By (\ref{ktor}), $\mbb K$ is positive off-diagonal and, by (\ref{stoch}) it is a Kolmogorov matrix and thus (\ref{barv}) represents a continuous time Markov chain in which the original distributed states are `lumped' together at the vertices.
\end{exa}

\section{Transport problems}
\subsection{Solvability of the general transport problem}
In the same way as with diffusion, we begin by considering (\ref{Transport''}) with $\e=1$ and denote then $\mbb T = \mbb I +\mbb B.$ The considerations of this section, however, are valid in a more general setting \cite{BFN2}. Hence, consider
\begin{equation}
\mb u_t(x,t) = [\mb A\mb u](x,t), \qquad\mb u(0,t) = \mbb T\mb u(1,t) ,\quad \mb u(x,0) = \mathring{\mb u}(x),
\label{ACP1b}
\end{equation}
where $\mb A$ is the realization of the expression  $\mathsf A\mb u= -\p_x \mb u$ on the domain $
D(\mb A) = \{\mb u \in \mb W^{1}_1(I);\; \mb u(0) = \mbb T\mb u(1)\}.$
   We emphasize that $\mbb T$ is an arbitrary (not necessarily nonnegative) matrix. For simplicity we only consider this problem in $\mb X = \mb L_1(I)$, since in $\mb C(I)$ the domain $D(\mb A)$ is not dense. Then we have the following result \cite{BFN2}.
\begin{theorem} The operator $(\mb A, D(\mb A))$ generates a $C_0$-semigroup on $\mb L_1(I)$. The semigroup is positive if and only if $\mbb K\geq 0$.\label{thtragen}
\end{theorem}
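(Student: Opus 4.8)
The plan is to construct the solution semigroup \fml{S} of (\ref{ACP1b}) explicitly, by the method of characteristics, and to read off both assertions from the resulting formula. Conceptually this is a boundary perturbation of the nilpotent left shift, but---exactly as in the $\mb L_1$ diffusion case---the boundary functional $\mb u\mapsto\mbb T\mb u(1)$ is unbounded on $\mb L_1(I)$, so a Greiner-type argument would again have to be routed through the adjoint on $\mb C(I)$, and the explicit representation is the cleanest self-contained approach. Start from $\mb A_0$, the case $\mbb T=0$ (absorbing condition $\mb u(0)=0$): it generates on $\mb L_1(I)$ the nilpotent left-translation semigroup $(e^{t\mb A_0}\mb f)(x)=\mb f(x-t)$ for $x\ge t$ and $0$ otherwise, which vanishes for $t\ge 1$. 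For a general matrix $\mbb T$, integrating along the characteristics $x-t=\mathrm{const}$ and inserting the interface relation $\mb u(0,s)=\mbb T\mb u(1,s)$ once per unit of time produces the candidate
\begin{equation}
(S(t)\mathring{\mb u})(x)=\left\{\begin{array}{ll}
\mathring{\mb u}(x-t), & 0\le t\le x,\\
\mbb T^{k+1}\mathring{\mb u}(1-(t-x-k)), & 0\le x<t,\ \ k=\lfloor t-x\rfloor .
\end{array}\right.
\label{Srep}
\end{equation}
Since the boundary space is $\mbb R^m$, the $\mbb T^{k}$ are ordinary matrix powers, so (\ref{Srep}) defines a bounded operator on $\mb L_1(I)$ for each $t\ge0$; the two branches agree a.e.\ at $t=x$, and pointwise there for $\mathring{\mb u}\in D(\mb A)$ because then $\mathring{\mb u}(0)=\mbb T\mathring{\mb u}(1)$.

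One then verifies by routine computations that \fml{S} is a $C_0$-semigroup whose generator is $\mb A$. \textbf{(i)} Splitting the $x$-integral along the two regimes of (\ref{Srep}) and using $\|\mbb T^{k}\|\le\|\mbb T\|^{k}$ yields $\|S(t)\|_{\mc L(\mb L_1(I))}\le\max_{0\le k\le\lfloor t\rfloor+1}\|\mbb T^{k}\|$, hence $\|S(t)\|\le Me^{\omega t}$ for suitable $M\ge1$ and $\omega\in\mbb R$ depending only on $\mbb T$. \textbf{(ii)} $S(0)=I$ is clear, and $S(t+s)=S(t)S(s)$ follows either from a direct case analysis of (\ref{Srep}) or, more cheaply, from uniqueness of the characteristic solution of the transport problem. \textbf{(iii)} For continuous $\mathring{\mb u}$ (not necessarily meeting the interface condition) a short estimate gives $\|S(t)\mathring{\mb u}-\mathring{\mb u}\|_{\mb L_1(I)}\to0$ as $t\to0^{+}$; since such functions are dense in $\mb L_1(I)$ and $\sup_{0\le t\le1}\|S(t)\|<\infty$, strong continuity extends to all of $\mb L_1(I)$. \textbf{(iv)} If $\mb B$ denotes the generator of \fml{S} and $t^{-1}(S(t)\mathring{\mb u}-\mathring{\mb u})$ converges in $\mb L_1(I)$, then inspection of the interval $[t,1]$ forces $\mathring{\mb u}\in\mb W^1_1(I)$ with limit $-\p_x\mathring{\mb u}$, while inspection of $[0,t]$ forces the potential jump of $S(t)\mathring{\mb u}$ near $x=0$, namely $\mathring{\mb u}(0)-\mbb T\mathring{\mb u}(1)$, to vanish; conversely $D(\mb A)\subseteq D(\mb B)$ with $\mb B=\mb A$ on $D(\mb A)$. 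Hence $\mb B=\mb A$, so $\mb A$ generates the $C_0$-semigroup \fml{S}. (An alternative to (i)--(iv): solving $\la\mb u+\p_x\mb u=\mb f$ with $\mb u(0)=\mbb T\mb u(1)$ gives $(\mbb I-e^{-\la}\mbb T)\mb u(0)=\mbb T\cl{0}{1}e^{-\la(1-s)}\mb f(s)\,ds$, which is solvable for $\Re\la$ large and exhibits $R(\la,\mb A)$ as a finite-rank perturbation of $R(\la,\mb A_0)$; turning this into a generation statement on $\mb L_1(I)$, however, again requires the duality device of (\ref{resnorm}).)

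For the positivity equivalence (with $\mbb T$ in the role of the matrix written $\mbb K$ in the statement): if $\mbb T\ge0$ componentwise, then $\mbb T^{k}\ge0$ for all $k\ge0$, so by (\ref{Srep}) the positive cone of $\mb L_1(I)$ is invariant and \sem{\mb A} is positive. Conversely, assume \sem{\mb A} positive, fix $j\in\mc M$, and apply $S(t)$ to the nonnegative datum $\chi_{[1-\delta,1]}\mb e_j$ with $\delta$ small; for $0<t<\delta$ and $x\in[0,t/2]$ one has $k=\lfloor t-x\rfloor=0$ and $1-(t-x)\in[1-\delta,1]$, so (\ref{Srep}) gives $(S(t)\chi_{[1-\delta,1]}\mb e_j)(x)=\mbb T\mb e_j$ on that set; positivity then forces $\mbb T\mb e_j\ge0$, and since $j$ was arbitrary, $\mbb T\ge0$.

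The only genuine obstacle is the generation step---making the heuristic behind (\ref{Srep}) rigorous on $\mb L_1(I)$, in particular verifying strong continuity across the ``bounce'' times $t\in\mbb N$ and, in (iv), showing that $D(\mb B)$ can contain no function outside $D(\mb A)$, i.e.\ that the interface condition must reappear in the limit. The exponential bound, the semigroup law, and the whole positivity equivalence are immediate consequences of the explicit formula (\ref{Srep}).
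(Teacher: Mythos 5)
Your proof is correct, and it is essentially the argument the paper relies on: the paper itself gives no proof of Theorem \ref{thtragen} (it defers to the companion paper \cite{BFN2}) and only records the explicit representation (\ref{repr1}), which coincides with the formula you construct by characteristics; your verification of the semigroup properties, the identification of the generator (difference quotients on $[t,1]$ forcing $\mb W^1_1$-regularity, and on $[0,t]$ forcing the interface condition $\mathring{\mb u}(0)=\mbb T\mathring{\mb u}(1)$), and the positivity equivalence read off from the matrix powers $\mbb T^n$ are all sound, and the positivity criterion is indeed about the boundary matrix $\mbb T=\mbb I+\mbb B$ (denoted $\mbb K$ in the statement). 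One minor inaccuracy, inessential to your argument: your parenthetical remark that the resolvent route would again need the duality device (\ref{resnorm}) is not right --- unlike the diffusion case, here the resolvent of (\ref{ACP1b}) is explicitly computable in $\mb L_1(I)$ from the variation-of-constants formula and the solvability of $(\mbb I-e^{-\la}\mbb T)\mb u(0)=\mbb T\int_0^1 e^{-\la(1-s)}\mb f(s)\,ds$ for $\Re\la$ large, so generation can be obtained directly in $\mb L_1(I)$ without passing through $\mb C(I)$.
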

Denote by \sem{\mb A_{\mbb T}} this semigroup. Then,  \cite{BaP,KS04},
 \begin{equation}
 [e^{t\mb A_{\mbb T}}\mathring{\mb u}](x) = \mbb T^n \mathring{\mb u}(n+x-t),\qquad -n\leq x-t\leq-n+1.
 \label{repr1}
 \end{equation}

\subsection{Asymptotic state lumping in transport problems}

It is easy to see that integrating (\ref{ACP1b}) and denoting $$\mb v(t)  = \left(\int_{0}^{1}u_{1}(x,t)dx, \ldots \int_{0}^{1}u_{m}(x,t)dx\right),$$ as in (\ref{projection}),  we obtain the system
$$
\p_t\mb v(t) + (\mbb I-\mbb T)\mb u(1,t) = 0.
$$
If one could assume that $\mb u$ slowly varies along each edge; that is, $\mb u(x,t) \approx const(t) = \mb u(1,t)$, thus giving $\mb v(t)=\int_0^1\mb u(x,t)dx\approx \mb u(1,t)$, then $\mb v$ could be approximated by the solution $\bar{\mb v}$ of the system
$$
\p_t\bar{\mb v} = (\mbb T-\mbb I)\bar{\mb v},
$$
as in the approximating equation (\ref{barv}) for the diffusion equation. In the latter case the above assumption was physically plausible as diffusion tends to flatten fluctuations of the density and, with fast diffusion, the density quickly becomes homogeneous in space. With transport, the situation is not so obvious as one of the characteristics of transport processes is that the initial profile propagates in time with little or no distortion and the choice of a suitable scaling is not obvious. In Section \ref{pitfal} we will show some cases which do not yield the expected result but here we will continue with a properly rescaled model.

\subsubsection{Properties of solutions to (\ref{Transport''})}

For each $\e>0$ we define an operator $(\mb A_{\epsilon}, D(\mb A_{\epsilon}))$ as the realization of the expression ${\sf A}_\e\mb u
= -\frac{1}{\epsilon}\partial_x \mathbf{u}$ on the domain
$
D(\mb A_{\epsilon})  := \{\mathbf{u} \in \mb W_{1}^{1}(I) ;\;\quad \mathbf{u}(0) = \mathbf{u}(1) +\epsilon \mathbb{B}\mathbf{u}(1)\}
$
and the limit operator $(\mb A, D(\mb A))$ as ${\sf A} \mathbf{u}  := -\partial_x \mathbf{u}$ restricted to
$
D(\mb A)  := \{\mathbf{u} \in \mb W_{1}^{1}(I);\;  \mathbf{u}(0) = \mathbf{u}(1)\}.
$
 Notice that, as for the diffusion,  each operator from $\mb A_{\epsilon}, {\epsilon \geq 0},$ is defined on a different domain. By Theorem \ref{thtragen}, for each $\e> 0$ there exists a semigroup \sem{\mb A_\e}.  We also have the `limit' semigroup \sem{\mb A}. As in the diffusion case, we show that \sem{\mb A_\e} are bounded uniformly in $\e$.

 \begin{lemma} For every $\e_0>0$ there is a constant $M$ such that for any $\e\in ]0,\e_0]$ and $t\in \mbb R_+$ we have
 \begin{equation}
 \|e^{\mb A_{\epsilon} t}\mathring{\mathbf{u}}\|_{\mb X} \leq Me^{t\|\mbb B\|}\|\mathring{\mathbf{u}}\|_{\mb X}.
 \label{unibd}
 \end{equation}\label{uneps}
 \end{lemma}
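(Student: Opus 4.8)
The plan is to reduce the lemma to the explicit representation formula (\ref{repr1}) for the unscaled transport semigroup. First I would observe that the operator $\mb A_\e$, with expression $-\e^{-1}\p_x$ and boundary condition $\mb u(0) = \mb u(1) + \e\mbb B\mb u(1) = (\mbb I + \e\mbb B)\mb u(1)$, is exactly $\e^{-1}$ times the generator $\mb A_{\mbb T_\e}$ of the transport problem (\ref{ACP1b}) with transfer matrix $\mbb T_\e := \mbb I + \e\mbb B$; hence $e^{t\mb A_\e} = e^{(t/\e)\mb A_{\mbb T_\e}}$ and, by (\ref{repr1}),
\[
[e^{t\mb A_\e}\mathring{\mb u}](x) = \mbb T_\e^{\,n}\,\mathring{\mb u}(n+x-t/\e), \qquad -n \le x - t/\e \le -n+1,
\]
where, for fixed $t>0$ and $x \in [0,1]$, the integer $n = n(x)$ satisfies $t/\e - x \le n \le t/\e - x + 1$. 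In particular $n(x)$ takes at most the two values $\lfloor t/\e\rfloor$ and $\lfloor t/\e\rfloor + 1$, and in all cases $n(x) \le t/\e + 1$.

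Next I would estimate the matrix powers. Taking on $\mbb R^m$ the norm underlying $\mb X = \mb L_1(I)$ and writing $\|\cdot\|$ for the induced operator norm, submultiplicativity gives $\|\mbb T_\e^{\,n}\| \le (1 + \e\|\mbb B\|)^n \le e^{n\e\|\mbb B\|}$, so that, using $n(x) \le t/\e + 1$,
\[
\|\mbb T_\e^{\,n(x)}\| \le e^{\e\|\mbb B\|}e^{t\|\mbb B\|} \le e^{\e_0\|\mbb B\|}e^{t\|\mbb B\|}
\]
uniformly in $x\in[0,1]$, $t\ge 0$ and $\e\in\,]0,\e_0]$. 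This already produces both the exponential factor $e^{t\|\mbb B\|}$ and a candidate constant $M = e^{\e_0\|\mbb B\|}$ that is independent of $\e$ and $t$.

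It then remains to carry out the $\mb L_1$ bookkeeping. Writing
\[
\|e^{t\mb A_\e}\mathring{\mb u}\|_{\mb X} = \cl{0}{1}\|\mbb T_\e^{\,n(x)}\mathring{\mb u}(n(x)+x-t/\e)\|\,dx \le e^{\e_0\|\mbb B\|}e^{t\|\mbb B\|}\cl{0}{1}\|\mathring{\mb u}(n(x)+x-t/\e)\|\,dx,
\]
one checks that the remaining integral equals $\|\mathring{\mb u}\|_{\mb X}$. Indeed, on each of the (at most two) subintervals of $[0,1]$ on which $n(x)$ is constant, the map $x \mapsto n(x)+x-t/\e$ is a translation; a direct computation with $k = \lfloor t/\e\rfloor$ and the fractional part of $t/\e$ shows that the images of these subintervals tile $[0,1]$ up to endpoints, so the piecewise translation is a measure-preserving bijection of $[0,1]$ onto itself and $\cl{0}{1}\|\mathring{\mb u}(n(x)+x-t/\e)\|\,dx = \cl{0}{1}\|\mathring{\mb u}(\xi)\|\,d\xi = \|\mathring{\mb u}\|_{\mb X}$. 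Combining the three steps yields (\ref{unibd}) with $M = e^{\e_0\|\mbb B\|}$.

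The argument involves no serious difficulty once (\ref{repr1}) is available; the only point needing care is this last step — confirming that the shift appearing in the representation formula is a measure-preserving bijection of the edge onto itself, so that the entire growth of the solution is carried by the matrix powers $\mbb T_\e^{\,n}$ while the $\mb L_1$ norm of the initial datum is otherwise reproduced exactly.
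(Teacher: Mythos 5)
Your proposal is correct and follows essentially the same route as the paper: the explicit representation formula (\ref{repr1}) with $\mbb T_\e=\mbb I+\e\mbb B$, the observation that the index of the matrix power is at most $t/\e+1$ and takes two adjacent values, the change of variables showing the shifted integrals reassemble $\|\mathring{\mb u}\|_{\mb L_1(I)}$, and the bound $(1+\e\|\mbb B\|)^{n}\leq M e^{t\|\mbb B\|}$. The only cosmetic difference is that you bound the matrix powers via $1+x\leq e^{x}$ while the paper uses $\bigl((1+\e\|\mbb B\|)^{1/(\e\|\mbb B\|)}\bigr)^{\|\mbb B\|t}(1+\e\|\mbb B\|)$, which yields the same type of constant.
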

 \begin{proof}
 In our case, (\ref{repr1}) becomes
 \bd
[e^{\mbb A_{\epsilon} t}\mathring{\mathbf{u}}](x)=(\mathbb{I}+\epsilon \mathbb{B})^{n}\mathring{\mathbf{u}}\left(n+x-\frac{1}{\epsilon}t\right) \qquad \text{for} \quad -n\leq x-\frac{1}{\epsilon}t\leq-n+1.
\ed
Hence, for $n-1 \leq t/\e\leq n$ we have, by changing variable of integration,
\begin{eqnarray}
&&\left\|e^{\mb A_{\epsilon} t}\mathring{\mathbf{u}}\right\|_{\mb X}\leq\|(\mathbb{I}+\epsilon \mathbb{B})^{n}\|\!\!\!\!\!\!\!\cl{0}{\frac{t}{\epsilon}-n+1}\!\!\!\!\!\left\|\mathring{\mb u}\left(n+x-\frac{t}{\epsilon}\right)\right\|dx]\nn\\&&\phantom{xxxxxxxxxx}+
\|(\mathbb{I}+\epsilon \mathbb{B})^{n-1}\|\!\!\!\!\!\!\!\cl{\frac{t}{\epsilon}-n+1}{1}\!\!\!\!\!\left\|\mathring{\mb u}\left(n-1+x-\frac{t}{\epsilon}\right)\right\|dx \nn\\
&&\phantom{xxxxxxxx}\leq
(1+\epsilon\left\| \mathbb{B}\right\|)^{n}\left\|\mathring{\mathbf{u}}\right\|_{\mb X}.\label{estimp}
\end{eqnarray}
As $n\leq \frac{t}{\epsilon}+1$,
\bd
(1+\epsilon\left\| \mathbb{B}\right\|)^{n}\left\|\mathring{\mathbf{u}}\right\|_{\mb X} \leq((1+\epsilon\left\| \mathbb{B}\right\|)^{\frac{1}{\epsilon\left\| \mathbb{B}\right\|}})^{\|\mathbb{B}\|t}(1+\epsilon\left\| \mathbb{B}\right\|)\left\|\mathring{\mathbf{u}}\right\|_{\mb X} \leq Me^{t\|\mbb B\|}\|\mathring{\mathbf{u}}\|_{\mb X}\text{.}
\ed

\end{proof}
\subsubsection{Formal expansion}
Formal steps are similar to that in Section \ref{ssFE}, so that we only provide a brief summary of them. First, the hydrodynamic space $\mb V$ consists of solutions to
    $$
    \p_x\mathbf{u} = 0,\qquad
     \mb u(0,t) = \mb u(1,t),
     $$
 so that $\mb V$ is spanned by $\{\mb e_i\}_{i\in \mc M},$ where $\mb e_j$, $j\in \mc M,$ are versors of $\mbb R^m$.
Clearly, the formal adjoint problem is given by
$$
{\partial}_{x}{\bphi}=0,\quad {\bphi}(0, t) = \bphi(1, t),
$$
and thus the projection onto $\mb V$ is given as in (\ref{projection0}); that is, by
\be
\mathcal{P}\mathbf{u}=\left(\cl{0}{1}{u}_{1}(x,t)dx,\cl{0}{1}{u}_{2}(x,t)dx,...,\cl{0}{1}{u}_{m}(x,t)dx\right).
\ee
Projecting the solution of (\ref{Transport}) onto the hydrodynamic and kinetic subspaces we have $\mathbf{u}_{\epsilon} = \mathcal{P}\mathbf{u}_{\epsilon} + \mathcal{Q}\mathbf{u}_{\epsilon}=\mathbf{v}_{\epsilon}+\mathbf{w}_{\epsilon},$ where $\mc Q=\mc I-\mc P$ so that $\mb w_\e \in \mb W =\{\mb w\in \mb X;\; \int_{0}^{1}\mb w(x)dx =0\}.$ Projecting the equations onto $\mb V$ and $\mb W$, we get
\begin{equation}
\p_t\mathbf{v}_{\epsilon}(t) = \mathbb{B}\mathbf{u}(0, t) = \mathbb{B}(\mathbf{v}_{\epsilon}(t) + \mathbf{w}_{\epsilon}(0, t)),\quad
\mathbf{v}_{\epsilon}(0)  = \mathcal{P}\mathring{\mathbf{u}}\label{Picar}
\end{equation}
and
\begin{eqnarray}
\partial_t \mathbf{w}_{\epsilon}(x,t) &=& -\frac{1}{\epsilon}\partial_{x}\mathbf{w}_{\epsilon}(x,t) - \mathbb{B}\mathbf{v}_{\epsilon}(x,t) - \mathbb{B}\mathbf{w}_{\epsilon}(0, t),\nn\\
\mathbf{w}_{\epsilon}(0, t) &=& \mathbf{w}_{\epsilon}(1, t) + \epsilon \mathbb{B}\mathbf{w}_{\epsilon}(1, t) + \epsilon \mathbb{B}\mathbf{v}_{\epsilon}(t),\nn\\
\mathbf{w}_{\epsilon}(x, 0) &=& \mathring{\mathbf{u}}(x) - \mathcal{P}\mathring{\mathbf{u}}\text{.}\label{perTransport}
\end{eqnarray}
Substituting the expansion $
\mathbf{w}_{\epsilon}(x, t)=\mathbf{w}_0(x, t)+\epsilon\mathbf{w}_{1}(x, t)+O(\epsilon^{2})
$ into (\ref{perTransport}), first we find
 that $\mathbf{w}_{0}= 0$. Thus the limit problem for $\mb v_\e$ is given by
\begin{equation}
\p_t\bar{\mathbf{v}}(t)= \mathbb{B}\bar{\mathbf{v}}(t),\quad \bar{\mathbf{v}}(0) = \mathcal{P}\mathring{\mathbf{u}}\label{Picar2}\text{.}
\end{equation}
For $\mb w_1$ (which only is needed for a technical reason) we obtain
\begin{equation}
\partial_x \mathbf{w}_{1}= -\mathbb{B}\bar{\mathbf{v}}.\label{Kinetic4}
\end{equation}
Thus $\mathbf{w}_{1}(x, t) = \mathbb{B} \bar{\mathbf{v}}(t)x+ \mathcal{H}(t)$, where $\mathcal{H}(t)$ is arbitrary. Since $\mathbf{w}_{1}\in \mb W$, we find $\mathbf{w}_{1}(x, t) = \mathbb{B} \bar{\mathbf{v}}(t)\left({1}/{2}-x\right)$,
 where $\bar{\mathbf{v}}(t)=e^{\mathbb{B}\,t}\mathcal{P}\mathring{\mathbf{u}}$.

As in the diffusion case, the approximation $(\mb v_\e, \mb w_\e) \approx (\bar{\mb v}, \e\mb w_1)$ does not hold close to $t=0$ unless $\mathring{\mb u} = \mc P\mathring{\mb u} + O(\e)$. Thus, as usual, we introduce the initial layer  $\tilde{\mathbf{v}}(\tau) = \tilde{\mathbf{v}}_0(\tau)+ O(\epsilon)$ and $\tilde{\mathbf{w}}(x, \tau) = \tilde{\mathbf{w}}_0(x,\tau)+O(\epsilon)$, where $\tau = t/\epsilon$. Then standard argument gives $\tilde{\mathbf{v}}_0(x,\tau)=0$, while for the kinetic part we get
\begin{equation}
\partial_\tau \tilde{\mathbf{w}}_{0}(x,\tau) = -\partial_{x}\tilde{\mathbf{w}}_{0}(x,\tau),\,\,
\tilde{\mathbf{w}}_{0}(0, \tau) = \tilde{\mathbf{w}}_{0}(1, \tau),\,\, \tilde{\mathbf{w}}_{0}(x, 0) = \mathring{\mb w}= \mathring{\mathbf{u}}(x) - \mathcal{P}\mathring{\mathbf{u}}\text{.}\label{tildew0}
\end{equation}
We observe that, denoting by $(e^{\frac{t}{\e}{\mb A}_{\mbb I}})_{t\geq 0}$ the semigroup solving (\ref{tildew0}), we can write
  $$
 [e^{\frac{t}{\e}\mb A_{\mbb I}}\mathring{\mb w}](x) =  \mathring{\mb w}(n+x-\e^{-1}t),\qquad -n\leq x-\e^{-1}t\leq-n+1
 $$
 and, by estimate (\ref{estimp}),
 \begin{equation}
 \|e^{\frac{t}{\e}\mb A_{\mbb I}}\mathring{\mb w}\|_{\mb X} \leq \| \mathring{\mb w}\|_{\mb X}.
 \label{estper}
 \end{equation}

 \subsubsection{The error estimates}
 Let us define approximation as follows
 \begin{equation}
 (\mathbf{v}_{\epsilon}(t),\mathbf{w}_{\epsilon}(x,t)) = (\bar{\mathbf{v}}(t),\epsilon \mathbf{w}_1(x,t)+\tilde{\mathbf{w}}_0(x,\tau)) + (\mb e_{\mb v}(x,t), \mb e_{\mb w}(x,t)).
  \label{approxfin}
  \end{equation}
            As with the diffusion, we encounter the problem that, in general, a given initial condition $\mathring{\mb u}$, even if it belongs to $\mb W^1_1(I),$  it does not belong to $D(\mb A_\e)$ for all $\e>0$. Thus the situation is even worse than in the diffusion case, as the transport semigroup is not analytic  and thus we do not a differentiable solution for $t>0$, which is essential for the error estimates. Similarly, in general, the initial condition in (\ref{tildew0}) is not periodic so that the initial layer is not differentiable either. Thus, as before, we will work with approximate initial conditions.
  \begin{lemma}
  Let $\mb u\in \mb W^1_1(I)$. Then, for any $\delta\in ]0,1/2[$ there is $\mb u_\delta = \mc P\mb u_\delta + \mc Q\mb u_\delta = \mb v_\delta +\mb w_\delta \in \mb W^1_1(I)$ that satisfy
  \begin{eqnarray}
  \mb u_\delta(0) &=& \mb u_\delta(1) = 0,\label{c1}\\
  \mb w_\delta(0) &=& \mb w_\delta(1) =0,\quad \mb w_\delta \in \mb W, \label{c2}\\
  \|\mb u_\delta - \mb u\|_{\mb X} &\leq& C\delta\|\mb u\|_{\mb W^1_1(I)}, \label{c3}\\
  \|\mb u_\delta\|_{\mb X}&\leq& C\|\mb u\|_{\mb W^1_1(I)}, \label{c4}
  \end{eqnarray}
  for some constant $C$ independent of $\delta$. \label{lem42}
  \end{lemma}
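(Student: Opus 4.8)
\textit{Proof plan.} The plan is to imitate the construction in Lemma~\ref{udeltalem}, which is now lighter since only $\mb W^1_1$--regularity is involved and only $\mb L_1$--estimates are needed. Since $\mb u\in\mb W^1_1(I)$ is componentwise absolutely continuous on $[0,1]$, its endpoint values $\mb u(0),\mb u(1)$ are well defined and $\sup_{x\in I}|\mb u(x)|\le\|\mb u\|_{\mb W^1_1(I)}$. For $\delta\in\,]0,1/2[$ I would first truncate $\mb u$ linearly near the endpoints, putting
\bd
\ol{\mb u}_\delta(x)=\left\{\begin{array}{lcl}
\frac{x}{\delta}\,\mb u(\delta)&\mathrm{for}& x\in[0,\delta],\\
\mb u(x)&\mathrm{for}& x\in[\delta,1-\delta],\\
\frac{1-x}{\delta}\,\mb u(1-\delta)&\mathrm{for}& x\in[1-\delta,1].
\end{array}\right.
\ed
Being continuous and piecewise of class $\mb W^1_1$, $\ol{\mb u}_\delta$ lies in $\mb W^1_1(I)$, and $\ol{\mb u}_\delta(0)=\ol{\mb u}_\delta(1)=0$. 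On each of the two boundary strips of length $\delta$ both $\ol{\mb u}_\delta$ and $\mb u$ are pointwise bounded by $\|\mb u\|_{\mb W^1_1(I)}$, whence $\|\ol{\mb u}_\delta-\mb u\|_{\mb X}\le C\delta\|\mb u\|_{\mb W^1_1(I)}$ and thus $\|\ol{\mb u}_\delta\|_{\mb X}\le C\|\mb u\|_{\mb W^1_1(I)}$, with $C$ independent of $\delta$ (here $\mb X=\mb L_1(I)$). Hence $\ol{\mb u}_\delta$ already satisfies (\ref{c1}), (\ref{c3}) and (\ref{c4}).

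Next I would arrange the kinetic part to vanish at the endpoints too. Since $\mc P\ol{\mb u}_\delta$ is the constant vector $\cl{0}{1}\ol{\mb u}_\delta(x)\,dx$, the function $\mc Q\ol{\mb u}_\delta=\ol{\mb u}_\delta-\mc P\ol{\mb u}_\delta$ already lies in $\mb W$ and is periodic, with $(\mc Q\ol{\mb u}_\delta)(0)=(\mc Q\ol{\mb u}_\delta)(1)=-\mc P\ol{\mb u}_\delta$. I would then fix, once and for all, a scalar $\chi\in W^1_1(I)$ with $\chi\ge0$, $\mathrm{supp}\,\chi\subset\,]1/4,3/4[$ and $\cl{0}{1}\chi=1$, and set
\bd
\mb u_\delta:=\ol{\mb u}_\delta-\left(\cl{0}{1}\ol{\mb u}_\delta(x)\,dx\right)\chi,\qquad \mb v_\delta:=\mc P\mb u_\delta,\qquad \mb w_\delta:=\mc Q\mb u_\delta .
\ed
Then $\cl{0}{1}\mb u_\delta=0$, so $\mb v_\delta=0$ and $\mb w_\delta=\mb u_\delta$; as $\chi$ vanishes near $0$ and $1$, both $\mb u_\delta$ and $\mb w_\delta$ vanish there, which is (\ref{c1}) and (\ref{c2}), and $\mb w_\delta\in\mb W$ is automatic. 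Finally $\|\mb u_\delta-\ol{\mb u}_\delta\|_{\mb X}=|\cl{0}{1}\ol{\mb u}_\delta|=|\mc P\mb u+\cl{0}{1}(\ol{\mb u}_\delta-\mb u)|\le|\mc P\mb u|+C\delta\|\mb u\|_{\mb W^1_1(I)}$, so by the triangle inequality $\mb u_\delta$ inherits (\ref{c3}) and (\ref{c4}) as soon as $|\mc P\mb u|\le C\delta\|\mb u\|_{\mb W^1_1(I)}$.

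I expect this last proviso to be the only genuinely delicate point. Imposing (\ref{c1}) and (\ref{c2}) at the same time forces $\mc P\mb u_\delta=0$ (because $\mb u_\delta=\mb v_\delta+\mb w_\delta$ with $\mb v_\delta$ constant), so the linear--in--$\delta$ rate (\ref{c3}) is compatible with a $\delta$--independent constant only when $\mb u$ itself has zero mean, i.e. $\mb u\in\mb W$. This is exactly the regime in which the lemma will be used: it is applied not to $\mathring{\mb u}$ but to the kinetic initial datum $\mathring{\mb u}-\mc P\mathring{\mb u}$ of (\ref{tildew0})/(\ref{approxfin}), for which $\mc P\mb u=0$ and the correction above is genuinely $O(\delta\|\mb u\|_{\mb W^1_1(I)})$; for a general $\mb u\in\mb W^1_1(I)$ one keeps (\ref{c1}), (\ref{c3}), (\ref{c4}) and weakens (\ref{c2}) to the statement — still sufficient for the initial--layer argument — that $\mb w_\delta=\mc Q\ol{\mb u}_\delta\in\mb W$ is periodic and hence lies in $D(\mb A)$. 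The remaining verifications, namely membership in $\mb W^1_1(I)$, continuity of $\ol{\mb u}_\delta$ at $x=\delta$ and $x=1-\delta$, and the $\delta$--independence of the constants, are routine and mirror the corresponding steps in Lemma~\ref{udeltalem}.
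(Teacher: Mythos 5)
Your truncation $\ol{\mb u}_\delta$ is exactly the paper's $\hat{\mb u}_\delta$ from (\ref{oszacowanie}), with the same endpoint bound (\ref{hatap}) and the same $O(\delta)$ estimate, so the first half of your argument coincides with the paper's. The divergence is in the second step, and there your diagnosis is correct and worth stating plainly: since (\ref{c2}) gives $\mb w_\delta(0)=0$, condition (\ref{c1}) forces $\mb v_\delta=\mc P\mb u_\delta=0$, and then (\ref{c3}) with a $\delta$-uniform constant would imply $|\mc P\mb u|\le C\delta\|\mb u\|_{\mb W^1_1(I)}$, which fails for small $\delta$ whenever $\mb u$ has nonzero mean. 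The lemma as printed is thus over-determined, and in fact the paper's own final object does not satisfy (\ref{c1}) either: the paper takes $\mb u_\delta=\mc P\hat{\mb u}_\delta+\bigl(\hat{\mb u}_\delta-\bom_\delta\,\mc P\hat{\mb u}_\delta\bigr)$ as in (\ref{finap}), with $\bom_\delta$ the normalized plateau (\ref{oszacowanie1}) vanishing at $x=0,1$, which delivers (\ref{c2})--(\ref{c4}) together with $\mb u_\delta(0)=\mb u_\delta(1)=\mc P\hat{\mb u}_\delta$, i.e.\ periodicity instead of (\ref{c1}). The structural difference from your construction is that the paper redistributes the mean of $\hat{\mb u}_\delta$ only \emph{inside} the kinetic component (via $\bom_\delta$), keeping the hydrodynamic part $\mb v_\delta=\mc P\hat{\mb u}_\delta$ intact, whereas you subtract the whole mean with the bump $\chi$, which annihilates $\mb v_\delta$ and hence ruins (\ref{c3}) for general $\mb u$.

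Where your proposal goes astray is the claim about how the lemma is used. In the proof of Theorem \ref{glowne} it is applied to the full datum $\mathring{\mb u}$, not to its kinetic part: the argument needs the decomposition $\mathring{\mb u}_\delta=\mathring{\mb v}_\delta+\mathring{\mb w}_\delta$ with a nontrivial $\mathring{\mb v}_\delta$, the bounds $\|\mathring{\mb v}-\mathring{\mb v}_\delta\|_{\mb X}\le C\delta\|\mathring{\mb u}\|_{\mb W^1_1(I)}$ and $\|\mathring{\mb v}_\delta\|_{\mb X}\le C\|\mathring{\mb u}\|_{\mb W^1_1(I)}$, and $\bar{\mb v}_\delta=e^{t\mbb B}\mathring{\mb v}_\delta$; your zero-mean version, which forces $\mb v_\delta=0$, is therefore not the version the error analysis requires. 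Your fallback for general $\mb u$ --- keep (\ref{c1}) and weaken (\ref{c2}) to mere periodicity of $\mb w_\delta$ --- is the mirror image of what the paper actually establishes, but it does not yield $\mathring{\mb w}_\delta(0)=\mathring{\mb w}_\delta(1)=0$, which the proof of Theorem \ref{glowne} invokes explicitly (it is what lies behind the assertion $\ti{\mb w}_{0\delta}(0,\tau)=0$ and the form of the boundary condition of the error system), so it would not plug into the paper's argument as written; conversely, the paper's choice gives up the membership $\mathring{\mb u}_\delta\in D(\mb A_\e)$ that (\ref{c1}) would provide, which is precisely the tension you uncovered. In short: your construction is sound as far as it goes, and your observation that (\ref{c1})--(\ref{c3}) are mutually inconsistent for $\mc P\mb u\neq0$ identifies a genuine defect of the statement, but the repair consistent with the paper's subsequent use is the paper's own one: correct within $\mb W$ by means of $\bom_\delta$, retain $\mb v_\delta=\mc P\hat{\mb u}_\delta$, and settle for $\mb u_\delta(0)=\mb u_\delta(1)$ in place of (\ref{c1}).
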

\begin{proof}
Let $\mb u\in \mb W_{1}^{1}(I).$ Condition (\ref{c1}) can be achieved by a construction similar to that of Lemma \ref{udeltalem}.  One easily checks that  $\mb u_{\delta}$ defined for $0<\delta<1/2$ as
\be
\label{oszacowanie}
\hat{\mb u}_{\delta}(x)=\left\{ \begin{array}{ccc}
\mb u(x)& \text{for} &\quad x\in\left[\delta,1-\delta\right],\\
\frac{\mb u(\delta)}{\delta} x& \text{for}& \quad x\in\left[0,\delta\right],\\
\frac{\mb u(1-\delta)}{\delta}(1-x)& \text{for}& \quad x\in\left[1-\delta,1\right],
\end{array}\right.
\ee
    satisfies $\hat{\mathbf{u}}_\delta\in \mb W^{1}_{1}(I)$.  For arbitrary $x\in \left[0,1\right]$ we have \begin{equation}
    |\hat{\mb u}_{\delta}(x)|\leq C''\left\| \mb u \right\|_{\mb W^{1}_{1}(I)}\label{hatap}\end{equation} and
\bd
\left\|\mb u-\hat{\mb u}_{\delta}\right\|_{\mb X} \leq \frac{\delta}{2}\left[ \mb u(\delta)+\mb u(1-\delta)\right]\leq C'\delta \left\| \mb u \right\|_{\mb W_{1}^{1}(I)}.
\ed
However, for $[\mc Q\hat{\mb u}_\delta]$ we only have
$[\mc Q\hat{\mb u}_\delta](0)= [\mc Q\hat{\mb u}_\delta](1);$
that is, the projection of $\hat{\mb u}_\delta$ onto $\mb W$ is periodic but does not satisfy (\ref{c2}). Thus, we define
\begin{equation}
\mb u_\delta(x) = \mb v_\delta(x) + \mb w_\delta(x) = \cl{0}{1}\hat{\mb u}_\delta(x)dx  + \left(\hat{\mb u}_\delta(x) - \bom_\delta (x)\int_0^1\hat{\mb u}_\delta(x)dx\right),
\label{finap}
\end{equation}
where
\be
\label{oszacowanie1}
\hat{\bom}_{\delta}(x)=\left\{ \begin{array}{lcl}
\frac{1}{1-\delta}& \text{for}& \quad x\in\left[\delta,1-\delta\right],\\
\frac{x}{(1-\delta)\delta} & \text{for}& \quad x\in\left[0,\delta\right],\\
\frac{1-x}{(1-\delta)\delta}& \text{for}&
 \quad x\in\left[1-\delta,1\right].
\end{array}\right.
\ee
Since $\int_0^1\bom_\delta(x)dx = 1$, we have $\mb w_\delta \in \mb W$. Then we have
\begin{eqnarray*}
\|\mb u -\mb u_\delta\|_{\mb X}\!\! &\leq & \!\!\|\mc P\mb u -\mc P\hat{\mb u}_\delta\|_{\mb X} + \|\mb u-\hat{\mb u}_\delta\|_{\mb X} + \left\| \cl{0}{1} \mb u(x)dx - \bom_\delta\cl{0}{1} \hat{\mb u}_\delta(x)dx\right\|_{\mb X}\\
&\leq& 2C'\delta\|\mb u\|_{\mb W^1_1(I)} + \cl{0}{1}\|\mb u(x)-\hat{\mb u}_\delta(x)\|_{\mb X}dx \\
&& \phantom{x}+ \|1-\bom_\delta\|_{\mb X}\cl{0}{1}|\hat{\mb u}_\delta(x)|dx\\
&\leq& 3C'\delta\|\mb u\|_{\mb W^1_1(I)} + 2C''\delta(1-\delta)\|\mb u\|_{\mb W^1_1(I)} \leq C\delta\|\mb u\|_{\mb W^1_1(I)}.
\end{eqnarray*}
Inequality (\ref{c4}) follows by integrating (\ref{finap}) and using (\ref{hatap}).
\end{proof}
Now we can formulate the main theorem.
\begin{thm}
\label{glowne}
  For any $T\in\left]0,\infty\right[$ there exists $C(T, \mbb B)$ such that for any (sufficiently small) $\e>0$ and  $\mathring{\mathbf{u}}\in \mb W_{1}^{1}(I)$ the solution $\mb u_\e(t) = e^{t\mb A_\e}\mathring{\mb u}$ of (\ref{Transport}) satisfies
  \begin{equation}
  \|\mb u_\e(t) - \bar{\mb v}(t) - \ti{\mb w}_0(t/\e)\|_{\mb X}\leq \e C(T,\mbb B)\|\mathring{\mb u}\|_{\mb W^1_1(I)},
  \label{oszacowanie``}
  \end{equation}
  uniformly on $[0,T],$ where $\bar{\mathbf{v}}$  and $\tilde{\mathbf{w}}_{0}$ solve, respectively, (\ref{Picar2}) and (\ref{tildew0}).
\end{thm}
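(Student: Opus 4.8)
The plan is to follow the scheme of the proof of Theorem \ref{mainth1}: to write the difference between $\mb u_\e$ and its leading asymptotics as the solution of a copy of (\ref{Transport''}) with a right-hand side and a boundary perturbation of order $\e$, and then to close the estimate with the uniform bound of Lemma \ref{uneps}. Two transport-specific features have to be accommodated. Since \sem{\mb A_\e} is not analytic, $t\mapsto e^{t\mb A_\e}\mathring{\mb u}$ is classically differentiable only for $\mathring{\mb u}\in D(\mb A_\e)$, which forces one, as in Lemma \ref{udeltalem}, to work with the regularised datum $\mathring{\mb u}_\delta$ of Lemma \ref{lem42}. More importantly, the initial layer $\ti{\mb w}_0(\cdot,\tau)$ does not decay: by (\ref{tildew0}) it is merely the $1$-periodic translate of $\mathring{\mb w}$, of zero mean, so the remainders it produces cannot be controlled by the exponential decay available in Theorem \ref{mainth1} but only by an averaging argument.

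First I would fix $\delta\in\,]0,1/2[$ and replace $\mathring{\mb u}$ by $\mathring{\mb u}_\delta=\mb v_\delta+\mb w_\delta$ from Lemma \ref{lem42}, so that $\mathring{\mb u}_\delta(0)=\mathring{\mb u}_\delta(1)=0$ (hence $\mathring{\mb u}_\delta\in D(\mb A_\e)$ for every $\e$) and $\mathring{\mb w}_\delta\in\mb W$ with $\mathring{\mb w}_\delta(0)=\mathring{\mb w}_\delta(1)=0$; let $\ti{\mb w}_{0,\delta}$ be the (now classical) solution of (\ref{tildew0}) with datum $\mathring{\mb w}_\delta$. Writing $\mb u_\e=\mb v_\e+\mb w_\e$ for the solution with datum $\mathring{\mb u}_\delta$ and $\mb w_1(x,t)=\mbb B\bar{\mb v}(t)(1/2-x)$ as in (\ref{Kinetic4}), I set $\mb e_{\mb v}=\mb v_\e-\bar{\mb v}$, $\mb e_{\mb w}=\mb w_\e-\e\mb w_1-\ti{\mb w}_{0,\delta}$ and use (\ref{Picar}), (\ref{perTransport}), (\ref{Picar2}), (\ref{Kinetic4}) and (\ref{tildew0}) to derive the coupled system for $(\mb e_{\mb v},\mb e_{\mb w})$. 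Exactly as in Theorem \ref{mainth1}, $\bar{\mb v}$ and $\e\mb w_1$ absorb the $O(1)$ source, the relation $\p_\tau\ti{\mb w}_{0,\delta}=-\p_x\ti{\mb w}_{0,\delta}$ cancels the singular $\e^{-1}$ term, and adding the two equations makes the coupling terms cancel, so that $\mb E:=\mb e_{\mb v}+\mb e_{\mb w}=\mb u_\e-\bar{\mb v}-\e\mb w_1-\ti{\mb w}_{0,\delta}$ solves
\begin{eqnarray*}
\p_t\mb E&=&-\frac{1}{\e}\p_x\mb E-\e\,\p_t\mb w_1,\\
\mb E(0,t)&=&(\mbb I+\e\mbb B)\mb E(1,t)+\bgam(t),\\
\mb E(x,0)&=&\mathring{\mb w}(x)-\mathring{\mb w}_\delta(x)-\e\mb w_1(x,0),
\end{eqnarray*}
with $\bgam(t)=\e\mbb B\ti{\mb w}_{0,\delta}(1,t/\e)-\frac{\e^2}{2}\mbb B^2\bar{\mb v}(t)$, which is bounded by $\e C(T,\mbb B)\|\mathring{\mb u}\|_{\mb W^1_1(I)}$ on $[0,T]$ thanks to Lemma \ref{lem42}.

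Next I would remove the inhomogeneous boundary term by the transport analogue of the lifting in Lemma \ref{lift}: for small $\e$ the matrix $\mbb I+\e\mbb B$ is invertible, and $\mb q(x,t):=-x(\mbb I+\e\mbb B)^{-1}\bgam(t)$ is affine in $x$ with $\mb q(0,t)=(\mbb I+\e\mbb B)\mb q(1,t)+\bgam(t)$ and $\|\mb q(\cdot,t)\|_{\mb X}\le\e C(T,\mbb B)\|\mathring{\mb u}\|_{\mb W^1_1(I)}$. The substitution $\mb G=\mb E-\mb q$ reduces the problem to one with the homogeneous boundary condition of $\mb A_\e$, so by the mild formula and the uniform bound (\ref{unibd}),
\[
\|\mb G(t)\|_{\mb X}\le Me^{t\|\mbb B\|}\Big(\|\mb E(\cdot,0)-\mb q(\cdot,0)\|_{\mb X}+\int_0^t\Big\|\frac{1}{\e}\p_x\mb q(s)+\e\,\p_t\mb w_1(s)+\p_t\mb q(s)\Big\|_{\mb X}\,ds\Big).
\]
The initial term is $O(\delta)+O(\e)$ (by Lemma \ref{lem42}, and since (\ref{c2}) gives $\ti{\mb w}_{0,\delta}(1,0)=\mathring{\mb w}_\delta(1)=0$, whence $\mb q(\cdot,0)=O(\e^2)$), and $\e\p_t\mb w_1$ is $O(\e)$ on $[0,T]$ because $\bar{\mb v}$ and $\p_t\bar{\mb v}$ are bounded there. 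What remains are the pieces of $\frac{1}{\e}\p_x\mb q$ and $\p_t\mb q$ that carry no spare factor of $\e$, namely those proportional to $\ti{\mb w}_{0,\delta}(1,s/\e)$ and $\p_\tau\ti{\mb w}_{0,\delta}(1,s/\e)$.

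This last point is the main obstacle. In Theorem \ref{mainth1} the analogue of these boundary functions was the exponentially decaying tail $\sum_n e^{\lambda_n\tau}\mb a_{n,\delta}$, for which a crude $L^1$-in-time bound produced the factor $\e$ at once. Here $s\mapsto\ti{\mb w}_{0,\delta}(1,s/\e)$ is only $\e$-periodic, but --- crucially --- it has zero mean over a period, because $\int_0^1\mathring{\mb w}_\delta=0$, and the same holds for $\p_\tau\ti{\mb w}_{0,\delta}(1,s/\e)$ by periodicity in $\tau$; hence each such function has a primitive in $s$ that is $O(\e)$ uniformly. Combining this with the explicit translation representation (\ref{repr1}) of \sem{\mb A_\e}, which turns the offending Duhamel integral into a discrete sum over the boundary crossings in $[s,t]$, each crossing carrying a power $(\mbb I+\e\mbb B)^k$ with $\|(\mbb I+\e\mbb B)^k\|\le e^{t\|\mbb B\|}$ for $k\le t/\e$ as in (\ref{estimp}), a summation-by-parts (averaging) argument bounds it by $\e C(T,\mbb B)\|\mathring{\mb u}\|_{\mb W^1_1(I)}$ on $[0,T]$. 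Assembling all the estimates yields $\|\mb E(t)\|_{\mb X}\le C(T,\mbb B)\big(\|\mathring{\mb w}-\mathring{\mb w}_\delta\|_{\mb X}+\e\|\mathring{\mb u}\|_{\mb W^1_1(I)}\big)$ with $C(T,\mbb B)$ independent of $\delta$; letting $\delta\to0$ and recalling that $\mb u_\e-\bar{\mb v}-\ti{\mb w}_0(t/\e)=\mb E+\e\mb w_1+(\ti{\mb w}_{0,\delta}-\ti{\mb w}_0)$, with $\|\ti{\mb w}_{0,\delta}-\ti{\mb w}_0\|_{\mb X}=O(\delta)$ by (\ref{estper}), gives the desired estimate (\ref{oszacowanie``}).
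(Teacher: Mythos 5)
Your proposal follows the same overall scheme as the paper's proof: regularisation of the initial datum via Lemma \ref{lem42}, a combined error $\mb E$ solving a copy of (\ref{Transport''}) with $O(\e)$ data, source and boundary inhomogeneity, a lifting of the boundary term into the equation, the Duhamel formula together with the uniform bound of Lemma \ref{uneps}, and a final triangle inequality with $\delta\le\e$ to return to the original datum. The one substantive divergence is the boundary condition for the error: you retain the residual $\e\mbb B\ti{\mb w}_{0,\delta}(1,t/\e)$ generated by the non-decaying initial layer (and a direct computation of $\mb e_{\mb w\delta}(0,t)$, which uses only the periodicity $\ti{\mb w}_{0,\delta}(0,\tau)=\ti{\mb w}_{0,\delta}(1,\tau)$ from (\ref{tildew0}), does produce this term), whereas in the paper's error system it is absent: there the construction of $\mathring{\mb u}_\delta$ is invoked to make the relevant layer trace vanish, so that only the $\e^2\mbb B^2\bar{\mb v}_\delta$ inhomogeneity needs lifting and the crude Duhamel estimate already yields the factor $\e$. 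Keeping the term forces you to extract a factor $\e$ from lifted sources of size $O(1)$ oscillating with period $\e$, and this averaging step is the only part of your argument that is merely sketched: for the piece coming from $\frac{1}{\e}\p_x\mb q$ the cancellation is indeed exact, since that source is constant in $x$, the semigroup applied to a constant is, by (\ref{repr1}), piecewise constant in $s$ on intervals of length $\e$, and the zero-mean periodic factor integrates to zero over each such interval; but for the piece coming from $\p_t\mb q$ the evolved affine lifting profile varies on the same $\e$-scale as the oscillation, so ``zero mean'' alone does not give the factor $\e$ — one must first integrate by parts in $s$ against the $O(\e)$ primitive of $\p_\tau\ti{\mb w}_{0,\delta}(1,\cdot)$ (the resulting term, the generator acting on the affine lift, is again constant in $x$) and only then average. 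With that step written out, and with the small bookkeeping slip in $\mb E(x,0)$ corrected (if $\mb u_\e$ starts from $\mathring{\mb u}_\delta$ and $\bar{\mb v},\mb w_1$ from the original datum, the hydrodynamic error at $t=0$ is $\mc P(\mathring{\mb u}_\delta-\mathring{\mb u})$, still $O(\delta)$), your argument closes and gives (\ref{oszacowanie``}); compared with the paper it buys an estimate that does not rely on the vanishing of the layer trace at the endpoints for all $\tau>0$, at the price of the extra averaging lemma.
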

\begin{proof}
Let us now consider (\ref{Transport''}) with $\mathring{\mb u} \in \mb W^1_1(I)$ replaced by $\mathring{\mb u}_\delta=(\mathring{\mb v}_\delta, \mathring{\mb w}_\delta)$ constructed as in Lemma \ref{lem42} and consider the corresponding approximation of $\mb u_{\e \delta}= (\mb v_{\e\delta}, \mb w_{\e\delta})$ given by $(\bar{\mathbf{v}}_\delta(t),\epsilon \mathbf{w}_{1\delta}(x,t)+\tilde{\mathbf{w}}_{0\delta}(x,\tau))$. As before, the error is defined by
${\mathbf{e}}_{\mathbf{v}\delta} (t)= \mathbf{v}_{\epsilon\delta}(t)-\bar{\mathbf{v}}_\delta(t)$ and ${\mathbf{e}}_{\mathbf{w}\delta}(x,t) = \mathbf{w}_{\epsilon\delta}(x,t)-\epsilon \mathbf{w}_{1\delta}(x,t)-\tilde{\mathbf{w}}_{0\delta}(x,\tau)\text{.}
$
Thanks to the construction of $\mathring{\mb u}_\delta$, all terms in the error equation are differentiable and satisfy the boundary conditions for  $t\in [0,\infty[$, in particular, we have $\ti{\mb w}_{0\delta}(0,\tau)=0$. Hence, by direct substitution, using (\ref{Kinetic4}) and (\ref{tildew0}), we find that the error is a classical solution of the following problem
\begin{eqnarray*}
\p_t{\mathbf{e}}_{\mathbf{v}\delta}(t) &=& \mathbb{B}{\mathbf{e}}_{\mathbf{v}\delta}(t) + \mathbb{B}{\mathbf{e}}_{\mathbf{w}\delta}(0,t)+\e\mathbb{B} \mathbf{w}_{1\delta}(0, t),\\
\partial_{t}{{\mathbf{e}}_{\mathbf{w}\delta}}(x,t) &=& -\frac{1}{\epsilon}\partial_x{\mathbf{e}}_{\mathbf{w}\delta}(x,t) - \mathbb{B}{\mathbf{e}}_{\mathbf{v}\delta}(t) - \mathbb{B}{\mathbf{e}}_{\mathbf{w}\delta}(0,t)-\epsilon \mbb B\mathbf{w}_{1\delta}(0,t)\\&&-\epsilon \partial_t\mathbf{w}_{1\delta}(x,t),\\
{\mathbf{e}}_{\mathbf{w}\delta}(0, t)& =& \mb{e}_{\mathbf{w}\delta}(1,t)+\epsilon\mathbb{B}\mb{e}_{\mathbf{v}\delta}(t)+\epsilon \mathbb{B}\mb{e}_{\mathbf{w}\delta}(1,t)+\frac{\epsilon^{2}}{2}\mathbb{B}^{2}\bar{\mathbf{v}}_\delta(t),
\\
\mb e_{\mb v\delta}(x,0)&=&0,\quad
{\mathbf{e}}_{\mathbf{w}\delta}(x, 0) = -\epsilon \mathbf{w}_{1}(x,0).
\end{eqnarray*}
 Combining these two equations for the single error
  ${\mathbf{E}_\delta} = {\mathbf{e}}_{\mathbf{v}\delta}+{\mathbf{e}}_{\mathbf{w}\delta}$, we get
\begin{eqnarray}
\partial_{t}{\mathbf{E}_\delta} &=& -\frac{1}{\epsilon}\partial_x{\mathbf{E}_\delta} - \epsilon \partial_t\mathbf{w}_{1\delta},\label{errorfin}\\
{\mathbf{E}_\delta}(0, t) &=&{\mathbf{E}}_\delta(1,t)+\epsilon \mathbb{B} {\mathbf{E}_\delta}(1,t)+\frac{\epsilon^{2}}{2}\mathbb{B}^{2}\bar{\mathbf{v}}_\delta(t),
\quad
{\mathbf{E}_\delta}(x, 0) =-\epsilon \mathbf{w}_{1\delta}(x,0).\nn
\end{eqnarray}
To be able to use the semigroup estimates for (\ref{errorfin}), we have to lift the inhomogeneity in the boundary condition to the equation. To do this, we define $
\mb F_\delta (x,t)=\mb E_\delta(x,t) - {\epsilon^{2}}(1-x)\mathbb{B}^{2}\bar{\mathbf{v}}_\delta(t)/2.
$
Then $\mb F_\delta$ satisfies
\begin{eqnarray*}
\partial_{t}{\mathbf{F}_\delta}(x,t) &=& -\frac{1}{\epsilon}\partial_x{\mathbf{F}_\delta}(x,t)+\e \mb J(x,t),\nn\\
{\mathbf{F}_\delta}(0, t)&=&{\mathbf{F}_\delta}(1,t)+\epsilon \mathbb{B} {\mathbf{F}_\delta}(1,t),\quad
{\mathbf{F}_\delta}(x, 0)=\e \mb H(x),
\end{eqnarray*}
where
\begin{eqnarray*}
\mb J(x,t)&=&\frac{1}{2}\mathbb{B}\bar{\mathbf{v}}_\delta(t)-\frac{\epsilon}{2}(1-x)\mathbb{B}^3
\bar{\mathbf{v}}_\delta(t) -\left(\frac{1}{2}-x\right)\mbb B^2\bar{\mb v}_\delta(t),\\
\mb H(x) &=& -\left(\frac{1}{2}-x\right)\mbb B\mathring{\mathbf{v}}_\delta(x)-\frac{\e(1-x)}{2}\mbb B\mathring{\mb v}_\delta(x).
\end{eqnarray*}
Thus, using the Duhamel formula and (\ref{unibd}), we obtain
\begin{eqnarray*}
\|\mb E_\delta(t)\|_{\mb X} &\leq& \|\mb F_\delta(t)\|_{\mb X} + \frac{\e^2}{4}\|\mbb B\|^2e^{\|\mbb B\|T}\|\mathring{\mb v}_\delta\|_{\mb X}\leq \frac{3\e(1+\e)M\|\mbb B\|}{4}\|\mathring{\mb v}_\delta\|_{\mb X}\\
&&+\frac{\e M(e^{\|\mbb B\|T}-1)}{4}(2 +\e\|\mbb B\|^2+ 2\|\mbb B\|)\|\mathring{\mb v}_\delta\|_{\mb X}\\
 &&+ \frac{\e^2}{4}\|\mbb B\|^2e^{\|\mbb B\|T}\|\mathring{\mb v}_\delta\|_{\mb X}\leq  C'_T\e \|\mathring{\mb v}_\delta\|_{\mb X}
\end{eqnarray*}
for some constant $C'_T$ independent of $\e$ and the initial condition.

To complete the proof, we take $\mathring{\mb u}\in \mb W^1_1(I)$ and, for any given $\e>0$ we construct the approximation $\mathring{\mb u}_\delta$ with  $\delta < \min\{\e, 1/2\}$. Further, let $\bar{\mb v}+\e\mb w_1 + \ti{\mb w}_0$ be the approximation constructed by (\ref{Picar2}), (\ref{Kinetic4}) and (\ref{tildew0}) with $\mathring{\mb v}=\mc P \mathring{\mb u}, \mathring{\mb w}=\mc Q \mathring{\mb u}$.  Then, denoting by $\mb E$ the error of this approximation, we find
\begin{eqnarray*}
\|\mb E(t)\|_{\mb X} &\leq& \|e^{t\mb A_\e}\mathring{\mb u} - e^{t\mbb B}\mathring{\mb v} -\e\mb w_{1}(t) - \ti{\mb w}_{0}(\tau)\|_{\mb X}\\
&\leq&  \|e^{t\mb A_\e}\mathring{\mb u}_\delta - e^{t\mbb B}\mathring{\mb v}_\delta -\e\mb w_{1\delta}(t) - \ti{\mb w}_{0\delta}(\tau)\|_{\mb X} +
\|e^{t\mb A_\e}(\mathring{\mb u}-\mathring{\mb u}_\delta)\|_{\mb X} \\
&&+ \|e^{t\mbb B}(\mathring{\mb v} -\mathring{\mb v}_\delta)\|_{\mb X}
 +\e\|\mb w_{1}(t)-\mb w_{1\delta}(t)\|_{\mb X} +\| \ti{\mb w}_{0}(\tau)-\ti{\mb w}_{0\delta}(\tau)\|_{\mb X}\\
 &\leq& C'_T\e \|\mathring{\mb v}_\delta\|_{\mb X} + Me^{\|\mbb B\|T}\|\mathring{\mb v} -\mathring{\mb v}_\delta\|_{\mb X} +\frac{\e e^{\|\mbb B\|T}}{2} \|\mathring{\mb v} -\mathring{\mb v}_\delta\|_{\mb X} \\&&+ \|\mathring{\mb w}-\mathring{\mb w}_\delta\|_{\mb X}\leq \e C''_T\|\mathring{\mb u}\|_{\mb W^1_1(I)},
\end{eqnarray*}
where in the last line we used (\ref{c3}), (\ref{c4}) and $\delta<\e$. Then, noting that $\e\|\mb w_1(t)\|_{\mb X} \leq \e {e^{T\|\mbb B\|}}/{2}$, we find that (\ref{oszacowanie``}) is true.\end{proof}

\begin{remark}
We observe that this result is of a different type than Theorem \ref{mainth1}. In the latter, the initial layer term $\ti{\mb w}_0$ decays exponentially to 0 as $\e \to 0$ for any $t>0$, see (\ref{F-solution}). Indeed, let $t\in [t_0,T], t_0>0$, $0<\e<\e_0$. Then (\ref{F-solution}) can be estimated as
$$
\ti{\mb w}_0(t/\e) \leq C_1(t_0,\e_0, \mathring{\mb u}) e^{-\frac{\pi^2 t}{\e}} \leq \e\frac{ C_1(t_0,\e_0, \mathring{\mb u})}{\pi^2 t_0}\frac{\pi^2 t}{\e} e^{-\frac{\pi^2 t}{\e}}\leq C_2\e
$$
where $C_2$ is independent of $t$ and $\e$. Hence (\ref{finest}) can be written as
$$
 \|\mb u_\e(t) - \bar{\mb v}(t)\|_{\mb X}\leq \e C_3
$$
uniformly on $[t_0,T]$, where $C_3$ is independent of $\e$ and $t,$ but depends on $\mathring{\mb u}$, $t_0$ and the coefficients of the boundary conditions. In other words, outside an $O(\e)$ transition zone, the whole solution to the PDE problem on a network (\ref{system1'}) can be approximated by the solution of an ODE system (\ref{barv}).

This is in contrast to (\ref{oszacowanie}), where the term $\ti{\mb w}_0$ does not decay exponentially with $\e \to 0$ but is periodic with period $\e$ (in $t$). Hence, in the transport problem (\ref{Transport''}), the solution $\mb u_\e(t)$ cannot be approximated by the solution $\bar{\mb v}$ to the Cauchy problem for an ODE, defined by (\ref{Picar2}). However, using the fact that $\mb X = \mb V \oplus \mb W$, we can project (\ref{oszacowanie}) to find that
 \begin{equation}
  \|\mb v_\e(t) - \bar{\mb v}(t)\|_{\mb X}\leq \e C(T,\mbb B)\|\mathring{\mb u}\|_{\mb W^1_1(I)},\,\,
     \|\mb w_\e(t) - \ti{\mb w}_0(t/\e)\|_{\mb X}\leq \e C(T,\mbb B)\|\mathring{\mb u}\|_{\mb W^1_1(I)}.
    \label{oszacowanie'}
  \end{equation}
  Thus, the macroscopic characteristics of the flow on the network; that is, the mass on each edge, can be approximated by the solution of  ODE (\ref{Picar2}). The reminder, however, approximately behaves as a fast oscillating function with zero mean.
\end{remark}
\begin{remark}
In the simple case of unit speeds along each edge, the convergence ensured by the first estimate of (\ref{oszacowanie'}) can be proved directly. Recall that
 \bd
[e^{\mb A_{\epsilon} t}\mathring{\mathbf{u}}](x)=(\mathbb{I}+\epsilon \mathbb{B})^{n}\mathring{\mathbf{u}}\left(n+x-\frac{1}{\epsilon}t\right) \qquad \text{for} \quad -n\leq x-\frac{1}{\epsilon}t\leq-n+1.
\ed
Hence, for $n-1 \leq t/\e\leq n$ we have, as in (\ref{estimp}),
$$
\mc P [e^{\mb A_{\epsilon} t}\mathring{\mathbf{u}}]
=(\mathbb{I}+\epsilon \mathbb{B})^{n-1}\!\!\cl{0}{1}\!\!\mathring{\mb u}\left(z\right)dz+\e\mbb B(\mathbb{I}+\epsilon \mathbb{B})^{n-1}\!\!\!\cl{n-\frac{t}{\epsilon}}{1}\!\!\!\mathring{\mb u}\left(z\right)dz.
$$
We have  $t = (n-1)\e +t',$ where $t'\in [0,\e]$; that is, $n-1 = t/\e +\theta(\e)$ with $0\leq \theta(\e) \leq 1$.
Since $\mbb B$ is a matrix, using the Dunford functional calculus we obtain
$$
\lim\limits_{\e\to 0} (\mathbb{I}+\epsilon \mathbb{B})^{n-1} = \lim\limits_{\e\to 0} (\mathbb{I}+\epsilon \mathbb{B})^{\frac{t}{\e}}(\mbb I+\e\mbb B)^{\theta(\e)} = e^{t\mbb B},\quad \lim\limits_{\e\to 0} \e\mbb B(\mathbb{I}+\epsilon \mathbb{B})^{n-1} =0.
$$
Hence
$$
\lim\limits_{\e\to 0}\mc P [e^{\mb A_{\epsilon} t}\mathring{\mathbf{u}}] = e^{t\mbb B}\mc P\mathring{\mb u},
$$
where $e^{t\mbb B}\mc P\mathring{\mb u}$ is the solution to (\ref{Picar2}). We observe that this result was obtained without assumption that $\mathring{\mb u}\in \mb W^1_1(I)$ but also it does not yield the rate of convergence ensured in (\ref{oszacowanie'}). In fact, using the density of $\mb W^1_1(I)$ in $\mb X$ and uniform boundedness with respect to $\e$ of all involved operators, this result can be deduced from (\ref{oszacowanie'}) by the $3-\e$ lemma (a corollary to the Banach-Steinhaus theorem), as  in (\ref{weak}).
\end{remark}

\subsubsection{Some pitfalls of constructing micro-models.}
\label{pitfal}
In the first two examples we consider the transport model (\ref{ACP1b})  on a strongly connected digraph $G$ so that the matrix $\mathbb{T}$ is irreducible and column stochastic \cite{BaP,KS04}.
\begin{example}
 First we try to mimic the scaling of (\ref{system1'}) and consider
\begin{equation}
\partial_{t}\mathbf{u}_\e+\frac{1}{\epsilon}\partial_{x}\mathbf{u}_\e = 0,\quad \mathbf{u}_\e(0, t)= \epsilon \mathbb{T}\mathbf{u}_\e(1, t), \quad
\mathbf{u}_\e(x, 0)  = \mathring{\mb u}(x).\label{Transport}
\end{equation}
It is clear that the hydrodynamic space of this problem, spanned by the solutions $
\p_x\mb u =0, \, \mb u(0,t)=0,
$
only consists of the zero function and thus (\ref{Transport}) does not offer any interesting limit dynamics.
\end{example}
       \begin{example}     \label{ex52}   In the next case  the boundary condition is changed as follows,
\begin{equation}
\partial_{t}\mathbf{u}_\e+\frac{1}{\epsilon}\partial_{x}\mathbf{u}_\e = 0,\quad \mathbf{u}_\e(0, t)= \mathbb{T}\mathbf{u}_\e(1, t), \quad
\mathbf{u}_\e(x, 0)  = \mathring{\mb u}(x).\label{Tran'}
\end{equation}
    In this case, the hydrodynamic space $\mb V$ consists of solutions to
    $
    \p_x\mathbf{u} = 0,
     \mb u(0,t) = \mbb T\mb u(1,t).
    $
Proceeding as in previous section, we find that the projection of $\mathbf{u}$ onto $\mb V$ is given by
$\mathcal{P}\mathbf{u} = \left(\mb 1\cdot \int_{0}^{1}\mb u(x)dx\right)\mathbf{N},
$
 where $\mathbf{N}$ is the Perron eigenvector of the matrix $\mathbb{T}$ and $\mb 1 = (1,\ldots,1)$ is its left eigenvector; we assumed that $\mb N$ is normalised so that  $\mb 1\cdot\mb N =1$. Then $\mathbf{v}_{\epsilon} = \mathcal{P}\mathbf{u}_{\epsilon}$ satisfies
$$
 \partial_t \mb v_\e= \frac{1}{\epsilon}\mathcal{P}\p_x\mathbf{u}_\e  = \frac{1}{\epsilon}\left((\mathbf{u}(1, t)-\mathbf{u}(0, t))\cdot\mb 1\right)\mb N = \frac{1}{\epsilon} \mathbf{u}(1, t)\cdot(\mbb I -\mathbb{T}^{T})\mathbf{1} = 0.
$$
Hence, the equation for the projection of the solution onto $\mb V$ (the hydrodynamic part of the solution) is exactly the limit equation and $\mb v_\e = \rho \mb N,$ where $\rho = \left(\sum_{i=1}^{m}\int_{0}^{1}\mathring{u}_i(x)dx\right)$ is the initial mass. Then it is easy to see that the complementary projection (the kinetic part) $\mb w$ coincides with the initial layer $\ti {\mb w}$. However, the initial layer equation is identical to (\ref{Tran'}) (with $\e=1$) but with the initial condition in the kernel of $\mc P$. The asymptotic theory of such problems has been developed in e.g. Refs.  \cite{BaP,KS04,MS07} and, in general, it follows that $\ti{\mb w}$ does not have an exponential decay as $\tau \to 0$ ($\e\to 0$). We obtain the (exact) decomposition
$$
\mb u_\e (x,t) = \rho \mb N + \ti{\mb w}(x,t/\e)
$$
 which somehow resembles  Theorem \ref{glowne}. However, the hydrodynamic part of the solution gives the stationary distribution of the total mass on the network among the edges. Hence, it provides neither simplification of the original problem nor any interesting approximate dynamics.

\end{example}

\begin{example}\label{ex13}\textbf{Structured McKendrick model \cite{BaLabook,BSG,BSG2,BG}.}  Consider a population divided into  $m$ groups with respect to some attribute $i$  and assume that the individuals can move between them. These groups could refer  e.g. to geographical patches (and then the change of the group would be due physical migration), or to the number of  particular genes (whereupon the change can occur due to mutation).
 If the vector $\mb n(t) = (n_1(t),\ldots,n_m(t))  \in \mbb R^m$  gives the numbers of individuals  in groups $1,\ldots, m$  at time $t$,
then
\begin{equation}
\p_t\mb n= \mbb K\mb n, \qquad \mb n(0) = \mathring{\mb n},
\label{fastdyn1}
\end{equation}
where $\mbb{K} =\{k_{ij}\}_{1\leq i,j\leq m}$ is a
Kolmogorov transition matrix (of a time-continuous process); that is, nonnegative off-diagonal and the columns sum up to 0.
Assume now that we also want to include the demographical processes in  each patch and that  migrations occur at a much faster rate than the demographic processes. Let us  denote by $n_{i,\e}$ the
population density in patch $i$ and by $a$ the age of individuals. Then in each patch the density $n_{i,\e}(a,t)$ should satisfy the McKendrick equation and hence  we arrive at the system
\begin{equation}
\p_t\mb{n}_\e =\mb{A}\mb{n}_\e +\mbb {M}\mb{n}_\e + \frac{1}{\e}\mbb{K}\mb{n}_\e,\quad
\mb n_\e(0,t)= \cl{0}{\infty}\mbb B(a)\mb n_\e(a,t)da,\quad \mb n(a,0)= \mathring{\mb n},
\label{pertsola1'}
\end{equation}
where $\mb A = \mathrm{diag}\left\{-\p_a, \ldots,-\p_a\right\}$, $\mbb{B}(a)=\mathrm{diag}\{\beta_{j}(a)\}_{1\leq j\leq m}$ describes
        the age and patch specific fertility rates,  $\mbb M(a) = \mathrm{diag}\{-\mu_{j}(a)\}_{1\leq i,j\leq m}$,
where $\mu_j(a)$ is the age specific death rate in patch $j$ and $1/\e$ is the ratio of the reference times of demographic and migration processes.

Note that here we have a simplified version of the model from Fig. \ref{fig3}, where the exchange between the nodes occurs instantaneously according to the migration rates $k_{ij}$, $1\leq i,j \leq m, i\neq j$.

Biological heuristics suggests that no group structure
should persist for very large intergroup transition rates; that
is, for $\e\to 0$. The precise result depends on the structure of the network of connections described by the matrix $\mbb K$. In particular, if the corresponding network is strongly connected; that is,  if $\mbb K$ is irreducible,  then $\la = 0$ is the  dominant
simple eigenvalue of $\mbb{K}$ with  $\mb{1} =(1,1,\ldots,1)$ as a left eigenvector and a
corresponding positive right (Perron) eigenvector, denoted by $\mb{N}$, normalized so as to $\mb{1}\cdot\mb{N}=1$. The vector
$\mb{N} =(N_1,\cdots,N_m)$ is so-called the stable patch
distribution; that is, the asymptotic, as $t\to \infty$ and
disregarding demographic processes, distribution of the population described by (\ref{fastdyn1})
among the groups. Under some technical assumptions \cite{BSG,BG,BSG2}, it can be proved that the total population $n_\e = n_{1,\e}+\ldots+n_{m,\e}$ converges as $\e\to 0$ to the solution to
 \begin{equation}
 {\p_t{n}}= -{\p_a{n}} - \mu^*n,\quad
 n(0,t)=\cl{0}{\infty}\beta^*(a)n(a,t)da,\quad  n(a,0)= \mb 1\cdot\mathring {\mb n},
 \label{sola2}
\end{equation}
where  $\mu^* = \mu_1 N_1+\ldots +\mu_m N_m$ and $\beta^* = \beta_1 N_1+\ldots +\beta_m N_m$ are, respectively, the  `aggregated'
mortality and fertility rates. We observe that, as required by the general paradigm, (\ref{sola2}) does not display any explicit `space' structure. However, a  shadow of the original network persists in the model through the coefficients of the stable patch distribution vector $\mb N.$
More precisely, the coefficients of $\mb N$ give approximate fractions of the population residing on average in each patch and thus subject to patch specific mortality and birth processes. Therefore $\mu^*$ and $\beta^*$ are aggregated death and birth coefficients which take into account that different fractions of the population die and give birth with different rates.

At the same time we observe that (\ref{pertsola1'}), despite being constructed as a microscopic extension of (\ref{fastdyn1}), does not yield it back in the singular limit.
\end{example}

\end{document}